\newcommand{\bpsi}{\bm{\psi}}
\newcommand{\bphi}{\bm{\phi}}
\newcommand{\bp}{{\bm{p}}}
\newcommand{\be}{{\bm{e}}}
\begin{document}

\title{ On First Come, First Served Queues with Two Classes of Impatient Customers %\thanks{Grants or other notes
%about the article that should go on the front page should be
%placed here. General acknowledgments should be placed at the end of the article.}
}
%\subtitle{Do you have a subtitle?\\ If so, write it here}

%\titlerunning{Short form of title}        % if too long for running head

\author{Ivo Adan         \and
        Brett Hathaway \and
        Vidyadhar G. Kulkarni
         %etc.
}

%\authorrunning{Short form of author list} % if too long for running head

\institute{Ivo Adan
			\at Department of Industrial Engineering, Eindhoven University of Technology, 5600 MB Eindhoven, the Netherlands, \\ Tel.: +31 (40) 2472932 \\
              \email{iadan@.tue.nl}
                            \and Brett Hathaway
              \at Kenan-Flagler School of Business, University of North Carolina at Chapel Hill, Chapel Hill, NC 27599, \\
              Tel.: 801-837-0474 \\
              \email{brett\textunderscore hathaway@kenan-flagler.unc.edu}
              \and
              V. G. Kulkarni
            \at Department of Statistics and Operations Research, University of North Carolina at Chapel Hill, Chapel Hill, NC 27599, \\ Tel.: 919-962-3837 \\
              \email{vkulkarn@email.unc.edu}
}

\date{Received: date / Accepted: date}
% The correct dates will be entered by the editor

\maketitle

\begin{abstract}
We study systems with two classes of impatient customers who differ across the classes in their distribution of service times and patience times. The customers are served on a first-come, first served basis (FCFS) regardless of their class. Such systems are common in customer call centers, which often segment their arrivals into classes of callers whose requests differ in their complexity and criticality. We first consider an $M$/$G$/1 + $M$ queue and then analyze the $M$/$M$/$k$ + $M$ case. Analyzing these systems using a queue length process proves intractable as it would require us to keep track of the class of each customer at each position in queue. Consequently, we introduce a virtual waiting time process where the service times of customers who will eventually abandon the system are not considered. We analyze this process to obtain performance measures such as the percentage of customers receiving service in each class, the expected waiting times of customers in each class, and the average number of customers waiting in queue. We use our characterization to perform a numerical analysis of the $M$/$M$/$k$ + $M$ system, and find several managerial implications of administering a FCFS system with multiple classes of impatient customers. Finally, we compare the performance a system based on data from a call center with the steady-state performance measures of a comparable $M$/$M$/$k$ + $M$ system. We find that the performance measures of the $M$/$M$/$k$ + $M$ system serve as good approximations of the system based on real data.
\keywords{Call centers \and Impatient customers \and Virtual queueing time process \and $M$/$M$/$k$ + $M$ queue \and $M$/$G$/1 + $M$ queue}
\PACS{PACS code1 \and PACS code2 \and more}
\subclass{60K25 \and 68M20 \and 90B22}
\end{abstract}

\section{Introduction}
In this paper we analyze queueing systems with different classes of customers who may abandon (renege from) the system if their waiting time exceeds their patience time, i.e., the maximum amount of time they are willing to wait before abandoning the system. This work is motivated primarily by customer call centers, which often segment their callers into different classes. Since call centers are the prevalent customer-facing service channel of many organizations, they often receive a variety of caller requests which may differ significantly in their service requirements and criticality. For example, banking call centers receive requests as simple as balance inquiries and as complex and critical as dealing with fraudulent activity on a caller's account. While a service representative can obtain an account balance relatively quickly, handling fraudulent activity takes longer as it involves a higher level of legal expertise and paperwork. Furthermore, because fraudulent activity is usually more critical than obtaining a balance, callers who are calling about fraud may be more patient than callers who are calling to obtain a balance. Because callers' service requests vary so greatly, call centers often train subsets of their representatives to handle only certain types of service requests. Based on the service the callers request from the phone menu, the Automatic Call Distributor (ACD) segments callers into classes and routes the callers within each class to the appropriately trained subset of representatives. Depending on which types of requests are included in each class, these classes may differ from each other with respect to their distribution of service times and patience times. Consequently, one subset of representatives may serve a queue that receives arrivals from multiple classes that differ from each other in their typical service requirements and their callers' patience levels. Call centers sometimes give priority to certain classes based on criteria such as the callers' value to the organization or the criticality of the callers' service request. However, in an effort to be fair, call centers often serve their callers on a first-come, first-served basis (FCFS) regardless of class. Because the FCFS policy is such a common practice, it is important to describe the performance of such systems. In this study we do this by characterizing the performance of FCFS systems with two customer classes that may differ from each other in both their distribution of service times and their distribution of patience times.

As queue abandonment is a common customer behavior in many service systems, it is not surprising that a large number of studies have been devoted to characterizing queueing systems with impatient customers.  Approaches for describing the performance of systems with a single class of impatient customers have included analytical characterizations (Daley \cite{daley1965general}, Baccelli and Hebuterne \cite{baccelli1981queues}, Baccelli et al. \cite{baccelli1984single}, Stanford \cite{stanford1979reneging}), and performance approximations (Garnett et al. \cite{garnett2002designing}, Zeltyn and Mandelbaum \cite{zeltyn2005call}, Iravani and Balc{\i}og̃lu \cite{iravani2008approximations}). The literature also contains several studies of two-class systems. In most of these studies, one customer class is prioritized over the other. Choi et al. \cite{choi2001m} analyze the underlying Markov process of an $M$/$M$/1 queue where one class of customers with constant patience times receives preemptive priority over a second class of customers who have no impatience. The authors obtain the joint distribution of the system size, and the Laplace transform (LT) of the response time of the second class of customers. Brandt and Brandt \cite{brandt2004two} extend the approach in \cite{choi2001m} to include generally distributed patience times for the first class of customers. Iravani and Balc{\i}og̃lu \cite{iravani2008priority} use the level-crossing technique proposed in Brill and Posner \cite{brill1977level} and Brill and Posner \cite{brill1981system} to study two $M$/$GI$/1 settings. They first consider a preemptive-resume discipline where customers in both classes have exponentially distributed patience times, and then consider a non-preemptive discipline where customers in the first class have exponentially distributed patience times, but customers in the second class have no impatience. Iravani and Balc{\i}og̃lu obtain the waiting time distributions for each class and the probability that customers in each class will abandon.

A handful of papers have dealt with priority queues with two classes of impatient customers in a multi-server setting. Motivated by call centers where callers may leave a voicemail, Brandt and Brandt \cite{brandt1999two} consider a multi-server system where callers from the first class are impatient and receive priority over callers from the second class, who have no impatience. Callers from the first class who renege may join the second class by leaving a voicemail and are only contacted when the number of idle servers in the system exceeds some threshold. The authors obtain the exact distribution of the number of callers in service from the first class, and approximations of the moments of the number of callers in service from the second class. Jouini and Roubos \cite{jouini2014multiple} consider an $M$/$M$/$s$ + $M$ queueing system where all customers have the same mean service times and mean patience times, but callers in one of the classes receive non-preemptive priority. Within each class, customers may be served according to a FCFS or last-come, first-served (LCFS) discipline. Jouini and Roubos obtain the mean unconditional waiting times, the mean waiting times conditional on receiving service, and the mean waiting times conditional on abandoning the system for both classes under several policy permutations.

The studies that are most pertinent to our work are those in which classes of impatient customers are served on a FCFS basis, regardless of their class. Gurvich and Whitt \cite{gurvich2007service} approximate the performance of a multiclass call center with impatient callers under the quality-and-efficiency-driven (QED) regime introduced by Halfin and Whitt \cite{halfin1981heavy}. They analyze how the call center performs under a class of asymptotically optimal routing policies, of which the FCFS policy is a special case. Talreja and Whitt \cite{talreja2008fluid} rely on a deterministic fluid model to approximate the performance of a multiserver, multiclass FCFS system with impatient customers in an overloaded, efficiency-driven (ED) regime. Adan et al. \cite{adan2013design} design heuristics to determine the staffing levels  required to meet target service levels in an overloaded FCFS multiclass system with impatient customers. Van Houdt \cite{van2012analysis} considers an $MAP$/$PH$/1 multiclass queue where customers in each of the classes have a general distribution of patience times. Van Houdt develops a numerical procedure for analyzing the performance characteristics of the system by reducing the joint workload and arrival processes into a fluid queue, and expresses the steady state measures using matrix analytical methods. His method produces an exact characterization of the waiting time distribution and abandonment probability under a discrete distribution of patience times, and approximations of the same performance measures under a continuous distribution of patience times. Sakuma and Takine \cite{sakuma2017multi} study the $M$/$PH$/1 system and assume that customers within each class have the same deterministic patience time. In addition to obtaining the waiting time distribution and abandonment probabilities, they obtain the joint queue-length distribution. Finally, Sarhangian and Balc{\i}og̃lu \cite{sarhangian2013waiting} study two multiclass FCFS systems. The first system is an $M$/$G$/1+$M$ queue where customers across classes may differ in their service time distribution, but all customers share common exponentially distributed patience times. The second system is an $M$/$M$/$c$+$M$ queue where customers across classes may differ in their exponentially distributed patience times, but all customers share the same exponentially distributed service times. For both systems, the authors obtain the LT of the virtual waiting time for each of the $k$ classes by exploiting the level crossing technique in \cite{brill1977level} and \cite{brill1981system}. They then relate the virtual waiting time to the actual waiting time to compute steady-state performance measures such as the mean waiting times and the percentage of customers who renege from each class.

In this study we analyze two systems in which two classes of impatient customers are served according to an FCFS policy. The distinction between our setting and the settings in previous studies is that in our setting customers across classes may differ in their distributions of their service times and patience times, while in previous settings customers across classes may only differ in one of the two distributions. This distinction is crucial in characterizing the performance of multiclass systems with customers whose service times and patience may vary based on the complexity and criticality of their requests. We first consider an $M$/$G$/1+$M$ queue, and then analyze an $M$/$M$/$k$+$M$ queue. To characterize the performance of these systems, we introduce a virtual waiting time process as described in Benes \cite{benes1963general} and Tak{\'a}cs, et al. \cite{takacs1962introduction} (see Heyman and Sobel \cite{heyman1982stochastic}, pp. 383-390 for details). In a virtual waiting time process the service times of customers who will eventually abandon the system are not considered. By analyzing this process we obtain performance characteristics such as the percentage of customers who receive service in each class, the expected waiting times of customers in each class, and the average number of customers waiting in queue from each class. Note that although a related formula for the virtual waiting time in a single class $M$/$G$/1+$M$ queue is reported in \cite{brandt2013workload}, it is not suitable for direct computation as it consists of an exponentially growing number of terms. We next perform a numerical analysis of the $M$/$M$/$k$+$M$ system under various arrival rates, mean service times, and mean patience times. Our analysis demonstrates that accounting for differences across classes in the distribution of customers' service times and patience times is critical as the performance of our system differs considerably from a system where only the service time distribution varies across classes. The results of our numerical analysis have broad managerial implications including service level forecasting, revenue management, and the evaluation of server productivity. As a final exercise, we compare the simulated performance of a system based on data from a multiclass call center with the performance measures of a comparable $M$/$M$/$k$+$M$ system. To construct our simulated system, we select two classes from the data that differ in their distribution of service times and caller patience times. We find that the performance measures from the $M$/$M$/$k$+$M$ system serve as good approximations of the performance of the simulated system based on the call center data.

The remainder of the paper is organized as follows.  In Sect. \ref{sec:M/G/1} we analyze the $M$/$G$/1+$M$ queue.  In Sect. \ref{sec:M/M/k} we analyze the $M$/$M$/$k$+$M$ queue, including a special case where the two classes share a common mean service time. In Sect. \ref{sec:pm} we derive steady-state performance measures. In Sect. \ref{sec:Numerical} we present our numerical analysis, and in Sect. \ref{sec:Approx} we compare the performance of the simulated system based on real data and our analytical characterization of the $M$/$M$/$k$+$M$ system.

\section{M/G/1+M System}\label{sec:M/G/1}
We begin with a single server queueing system serving two classes of customers, see Figure \ref{fig:model1}. Assume that class $i$ ($i=1,2$) customers arrive according to independent Poisson processes with rate $\lambda_i$ and
need independent and identically distributed (iid) service times with cdf $G_i(\cdot)$ and mean $\tau_i$. The customers are impatient, and customers from class $i$ leave the system after an exponential amount of time with parameter $\theta_i$ (called the patience time) if their queueing time is longer than their patience time. The patience times of customers are independent of each other. For this system we are interested in performance characteristics such as the long-run fraction of customers entering service, server utilization, and the expected waiting time for service. Note that due to the impatience of the customers in each class, the system will always be stable even if the total arrival rate exceeds the service rate.

\begin{figure}[htb]
\begin{center}
\includegraphics[width=0.3\linewidth]{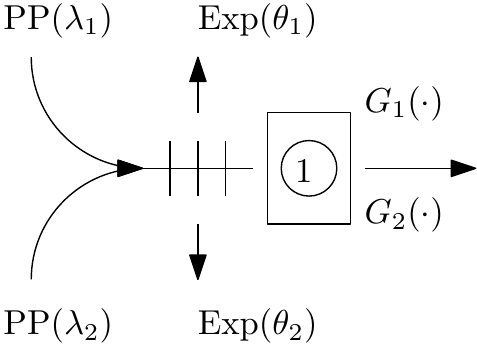}
\caption{Single server model.}\label{fig:model1}
\end{center}
\end{figure}

It seems natural to study this model through its queue length process. However, keeping track of the number of customers from each class who are in the queue is not sufficient. We would also need to keep track of the class of each customer at each position in queue, since patience times depend on customer class. This renders the Markov process intractable. Thus, we introduce the virtual queueing time process below. This process appears to be tractable.

Let $W(t)$ be the virtual queueing time at time $t$.  We know that $W(t)$ decreases with rate 1 at all times while it is positive. If an arrival from class $i$ occurs at time $t$ and $W(t) = w$, the arrival leaves without service with probability $1-e^{-\theta_i w}$, or enters service with probability $e^{-\theta_i w}$, and needs a random amount of service with cdf $G_i(\cdot)$. Hence the distribution of $S_i(t)$, the size of the upward jump at time $t$ due to a class $i$ arrival, given $W(t) = w$, is given by
\begin{equation} \label{eq:cdfst}
 P(S_i(t) \le y|W(t) = w) = 1-e^{-\theta_i w} + e^{-\theta_i w}G_i(y)
 \end{equation}
and thus
\begin{eqnarray*}
% \nonumber % Remove numbering (before each equation)
  E(e^{-s S_i(t)}|W(t)=w) &=& 1-e^{-\theta_i w} + e^{-\theta_i w} \tilde{G}_i(s)  ,
 % &=& 1 -  e^{-\theta_i w} (1-\tilde{G}_i(s)).
\end{eqnarray*}
where $\tilde{G}_i(\cdot)$ is the LST of $G_i(\cdot)$.
Let
\begin{eqnarray*}
% \nonumber % Remove numbering (before each equation)
  \psi(s,t) &=& E(e^{-s W(t)}), \\
  p_0(t) &=& P(W(t)=0), \\
  \phi(s,t) &=& \psi(s,t) - p_0 (t)
\end{eqnarray*}
and
\begin{eqnarray*}
% \nonumber % Remove numbering (before each equation)
  \psi(s) &=& \lim_{t \rightarrow \infty} \psi(s,t)\\
  & = & E(e^{-sW}) ,\label{eq:psidef}\\
  p_0 &=& \lim_{t \rightarrow \infty} p_0(t) \\
  &=&  P(W=0), \nonumber\\
  \phi(s) &=& \psi(s) - p_0, \nonumber
\end{eqnarray*}
 where $W$ is the limit (in distribution) of $W(t)$ as $t \rightarrow \infty$. In the interval $(t,t+h],$ an arrival of type $i$ occurs with probability $\lambda_ih + o(h)$, and no event occurs with probability $1 - (\lambda_1 + \lambda_2)h + o(h)$ as $h \rightarrow 0$. Then we get %for small $h > 0$,
\begin{eqnarray*}
\psi(s,t+h) & = &
(1-\lambda_1 h-\lambda_2 h) \phi (s,t) e^{sh} + (1-\lambda_1 h-\lambda_2 h) p_0(t) \\
& & + \sum_{i=1}^2 \lambda_i h (\phi (s,t) - \phi(s+\theta_i,t) + \psi ( s+\theta_i,t) \tilde{G}_i(s) ) + o(h).
%  & & + \lambda_1 h (\phi (s,t) - \phi(s+\theta_1,t) + \psi ( s+\theta_1,t) \tilde{G}_1(s) ) \\
%  & & + \lambda_2 h (\phi (s,t) - \phi(s+\theta_2,t) + \psi ( s+\theta_2,t) \tilde{G}_2(s) ) + o(h).
\end{eqnarray*}
Inserting $e^{sh} = 1 + sh + o(h)$, rearranging terms and dividing by $h$ and then letting $h \rightarrow 0$, we obtain
\begin{eqnarray*}
 \frac{d}{dt}\psi (s,t) &= & s\phi (s,t) - \sum_{i=1}^2 \lambda_i \psi(s+\theta_i,t) (1-  \tilde{G}_i(s)).
\end{eqnarray*}
Now let $t \rightarrow \infty$. Then $\frac{d}{dt}\psi(s,t) \rightarrow 0$, $\psi(s,t) \rightarrow \psi(s)$ and
$\phi (s,t) \rightarrow \phi(s)$, so
\begin{eqnarray*}
0 &= & s\phi (s) - \sum_{i=1}^2 \lambda_i \psi(s+\theta_i) (1-\tilde{G}_i(s)) .
\end{eqnarray*}
Dividing by $s$ and using
\[ {H}_i(s) = \lambda_i  \frac{1-\tilde{G}_i(s)}{ s}, \]
we finally get
\begin{equation}\label{eq:psip0}
\psi(s) = p_0 + \sum_{i=1}^2  \psi(s+\theta_i) {H}_i(s).
\end{equation}
Note that ${H}_i(s)/(\lambda_i \tau_i)$ is the LST of the equilibrium distribution of the service times of customers from class $i$.

Sarhangian and Balc{\i}og̃lu \cite{sarhangian2013waiting} have derived this equation by a different method, however they could solve it only in the special case $\theta_1 = \theta_2$.  Here we develop an efficient  method to solve Eq.~\ref{eq:psip0} even when $\theta_1 \ne \theta_2$. We have included the detailed derivation of this equation, since the same steps can be used to derive the equations in the multi-server set up studied in the next section.

Repeated application of Eq.~\eqref{eq:psip0} shows that its solution can be written as
\begin{equation}\label{eq:psis}
\psi(s) = p_0 c(s),
\end{equation}
where
\[
c(s) = \sum_{i=0}^\infty\sum_{j=0}^\infty c_{i,j}(s).
\]
The terms $c_{i,j}(s)$ satisfy the recursion
\[ c_{i,j}(s) = {H}_1(s+(i-1)\theta_1+j\theta_2)c_{i-1,j}(s) + {H}_2(s+ i\theta_1+(j-1)\theta_2)c_{i,j-1}(s),\]
with initially $c_{0,0} (s)= 1$ and $c_{i,j} (s)= 0$ if $i<0$ or $j<0$.
The recursive procedure to obtain \eqref{eq:psis} as well as convergence properties of the series $c(s)$ will be explained in more detail in Sect.~\ref{sec:M/M/k}, where we analyze the multi server system.
Finally, to determine $p_0$, we use $\psi(0)=1$ in \eqref{eq:psip0}, yielding
\[
 p_0  = 1 - \sum_{i=1}^2 \psi(\theta_i)\lambda_i \tau_i = 1 - p_0 \sum_{i=1}^2 c(\theta_i) \lambda_i \tau_i ,
\]
so
\[
p_0 = \left[ 1+\sum_{i=1}^2 c(\theta_i) \lambda_i \tau_i \right]^{-1}.
\]
We shall see in Sect.~\ref{sec:pm} that many performance measures can be computed in terms of $\psi(\theta_i)$.

%\begin{remark}(Level crossing) \label{rm:level}
%{\rm
%An alternative derivation of \eqref{eq:psip0} uses a level crossing argument. Let $f(\cdot)$ denote the density of $W$. Then, by equating the rate of down crossings through work load level $W=w$ and the rate of up crossings, we get
%\begin{equation}\label{eq:lc}
%f(w) = p_0 \sum_{i=1}^2 \lambda_i (1-G_i (w)) +
%\int_{y=0}^{w} \sum_{i=1}^2 \lambda_i e^{-\theta_i y} (1-G_i(w-y)) f(y) dy .
%\end{equation}
%Multiplying both sides of \eqref{eq:lc} by $e^{-sw}$ and integrating from $0$ to $\infty$ yields
%\begin{eqnarray*}
%%\int_{w=0}^{\infty} e^{-sw} f(w) dw =
%\phi(s)  &=& p_0 \sum_{i=1}^2 {H}_i (s)
% + \sum_{i=1}^2 \lambda_i \int_{w=0}^{\infty} e^{-sw} \int_{y=0}^{w}  e^{-\theta_i y} (1-G_i(w-y)) f(y) dy dw
%\end{eqnarray*}
%and changing the order of integration
%\begin{eqnarray*}
%\phi(s) &=& p_0 \sum_{i=1}^2 {H}_i (s)
% + \sum_{i=1}^2 \lambda_i \int_{y=0}^{\infty}  e^{-(s+\theta_i) y} f(y) dy \int_{w=y}^{\infty} e^{-s(w-y)}  (1-G_i(w-y)) dw \\
%& = & p_0 \sum_{i=1}^2 {H}_i (s) + \sum_{i=1}^2 \left(\psi(s+\theta_i) - p_0 \right) {H}_i (s) \\
%& = & \sum_{i=1}^2  \psi(s+\theta_i) {H}_i (s),
%\end{eqnarray*}
%which is the same as \eqref{eq:psip0}.
%}
%\end{remark}

\begin{remark}(Hyper-Exponential impatience) \label{rm:hyp}
{\rm
For Hyper-Exp($\theta_{ij}, p_{ij}$) impatience of class $i$ customers,
it is straightforward to derive the  equation
\[
\psi(s) = p_0 + \sum_{i=1}^2 \sum_j p_{ij} \psi(s+\theta_{ij}) {H}_i (s),
\]
where
\[
p_0 = 1-\sum_{i=1}^2 \sum_j p_{ij} \psi(\theta_{ij}) \lambda_i \tau_i.
\]
}
\end{remark}

One can also show that our results reduce to known results (see Daley \cite{daley1965general} for example) when specialized to a system with a single class of customers.

%\begin{remark}(Single-class) \label{rm:singlecl}
%{\rm
%In the special case of a single class of customers, say only class $1$ customers (thus $\lambda_2 = 0$),
%
%
%
%
%and exponential service times, our analysis simplifies to Eq. \eqref{eq:psip0} reduces to
%\[
%\psi(s) = p_0 + \psi(s+\theta_1) {H}_1(s).
%\]
%Using this equation recursively we get
%\[
%\psi(s) = p_0 \left( 1 + \sum_{n=0}^{\infty} \prod_{m=0}^n H_1 (s+m \theta_1) \right) ,
%\]
%where $p_0$ follows from the equation
%\[
%p_0  = 1 - \psi(\theta_1)\lambda_1 \tau_1 .
%\]
%This yields
%\[
%p_0 = \left( 1 + \sum_{n=0}^{\infty} \prod_{m=0}^n H_1 (m \theta_1) \right)^{-1},
%\]
%where we used that $H_1(0) = \lambda_1 \tau_1$. Specializing to Exp($\mu_1$) service times gives
%\begin{equation}\label{eq:p0exp}
%p_0 = \left( 1 + \sum_{n=0}^{\infty} \prod_{m=0}^n \frac{\lambda_1}{m \theta_1 + \mu_1} \right)^{-1} .
%\end{equation}
%This case can be also handled by a CTMC model as follows. Let $X_1(t)$ be the number of class $1$ customers in this system at time $t$. Then $\{X_1(t), t \ge 0\}$ is a birth and death process on $\{0,1,2,\cdots\}$ with constant birth rate $\lambda_1$ and death rate $(i-1)\theta_1+\mu_1$ in state $i > 0$. From standard analysis we immediately get expression \eqref{eq:p0exp} for $p_0$.
%\end{remark}
\section{M/M/k+M System}\label{sec:M/M/k}
\subsection{Model}
Now we consider a FCFS multi server system with $k$ servers serving two classes of customers, see Fig. \ref{fig:model2}. However, unlike in Sect. ~\ref{sec:M/G/1}, we now assume that the service times are exponentially distributed. To be precise, we assume that customers from class $i$ arrive according to a PP($\lambda_i$), need iid Exp($\mu_i$) service times, and exhibit iid Exp($\theta_i$) patience times ($i=1,2$). If service does not start before the patience time expires, the customer leaves without service.

\begin{figure}[htb]
\begin{center}
\includegraphics[width=0.3\linewidth]{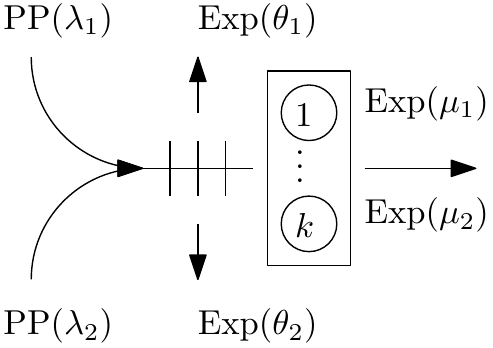}
\caption{Multi server model.}\label{fig:model2}
\end{center}
\end{figure}

\subsection{Virtual queueing time process}

As in the single server case, we study the virtual queueing time process, augmented by a supplementary variable to construct a Markov process. Let $W(t)$ be the virtual queueing time at time $t$ in this system. This is the queueing time that would be experienced by a virtual customer arriving at time $t$. Let $N_i(t)$ be the number of servers serving a class $i$ customer {just after} time $t+W(t)$ but before the next customer (if there is one) entering service at time $t+W(t)$. This means that $N_i(t)$ is the number of servers %in the system who are
busy with a class $i$ customer just before a customer arriving at time $t$ enters service. So $N_1 (t) + N_2 (t)$ is always at most $k-1$.

This unusual definition enables us to determine the size of the upward jump of the virtual queueing time if an arriving customer at time $t$ decides to join the queue (since his patience exceeds $W(t)$). The jump is the minimum of the service time of the arriving customer and the residual service times of the customers in service at the moment he enters service at time $t+W(t)$. As we will explain below, $\{(W(t),N_1(t),N_2(t)), t \ge 0\}$ is a Markov process with upward jumps, the size of which depend on $W(t)$, and a continuous downward deterministic drift of rate 1 between jumps.

Suppose $W(t)=0$ and $(N_1(t),N_2(t)) = (i,j)$.
%Let $T = \min\{t \ge 0: N_1(t)+N_2(t) = k-1\}$.
Then $(i,j)$ is the number of busy servers of class $1$ and $2$ at time $t$.
%This is a CTMC with state space $\{(i,j): 0 \le i+j \le k-1\}$.
For $0 \le i+j \le k-1$ the transition rates of services in state $(0,i,j)$ are given by
\begin{eqnarray*}
q_{(0,i,j),(0,i-1,j)} & = & i\mu_1, \\
q_{(0,i,j),(0,i,j-1)} & = & j\mu_2
\end{eqnarray*}
and for $0 \le i+j < k-1$ the transition rates of arrivals are
\begin{eqnarray*}
q_{(0,i,j),(0,i+1,j)} & = & \lambda_1, \\
q_{(0,i,j),(0,i,j+1)} & = & \lambda_2.
\end{eqnarray*}
%For $i+j = k-1$ the transition rates of arrivals are as follows.
%Now suppose the state  at time $0$ is $(0,i,j)$ with $i+j=k-1$.
%Consider an arrival from class 1 in this state.  This customer immediately enters service. The next departure %occurs after an {Exp}$((i+1)\mu_1 + j\mu_2)$ time.
%Hence the virtual queueing time jumps from 0 to $X$, where  $X$ is {Exp}$((i+1)\mu_1 + j\mu_2)$ and the state %jumps from $0,i,j)$ to $(X,i+1,j)$. Similar analysis works in case of a class 2 arrival. Thus for $i+j = k-1$, the %transition rates of arrivals are given by
%\begin{eqnarray*}
%q_{(0,i,j),([x,x+dx),i+1,j)} & = & \lambda_1 e^{-((i+1)\mu_1+j\mu_2)x}dx, \;\; i+j = k-1,\\
%q_{(0,i,j),([x,x+dx),i,j+1)} & = & \lambda_2 e^{-(i\mu_1+(j+1)\mu_2)x}dx, \;\; i+j = k-1.
%\end{eqnarray*}

This accounts for all transitions from states $(0,i,j), \; 0 \le i+j \le k-1$, except for the transition rates of arrivals in states with $i+j=k-1$. These transition rates are described below.

Now suppose the state of the tri-variate process at time $t$ is $(w,i,j)$ with $w \ge 0$ and $i+j=k-1$.
%In this case we must have $i+j=k-1$.
This means just after time $t+w$ we will have $i$ busy servers of class 1 and $j$ servers of class 2. Consider an arrival from class 1 at time $t$. This customer has to wait an amount $w$ for service to begin. He reneges before his service starts with probability $1 - e^{-\theta_1 w}$, in which case the state does not change, and he enters service at time $t+w$ with probability $e^{-\theta_1 w}$. Then the next departure occurs after an {Exp}$((i+1)\mu_1 + j\mu_2)$ time $X$ and the departure at time $t+w+X$ is from class 1 with probability $(i+1) \mu_1 /((i+1) \mu_1 + j \mu_2)$ and from class 2 with probability $j \mu_2 /((i+1) \mu_1 + j \mu_2 )$. In the first case the state jumps at time $t$ from $(w,i,j)$ to $(w+X,i,j)$. In the second case the state jumps to $(w+X,i+1,j-1)$. The process is similar in case of a class 2 arrival. Hence we get the following transition rates from states $(w,i,j)$ with $w \ge 0$ and $i+j=k-1$:
\begin{eqnarray*}
q_{(w,i,j),([w+x,w+x+dx),i+1,j-1)} & = & \lambda_1  \times e^{-\theta_1 w}((i+1) \mu_1 + j \mu_2 ) e^{-((i+1)\mu_1+j\mu_2)x}dx  \times \frac{j \mu_2}{(i+1) \mu_1  + j \mu_2 }\\
& = & \lambda_1  e^{-\theta_1 w} j \mu_2 e^{-((i+1)\mu_1+j\mu_2)x}dx ,\\
q_{(w,i,j),([w+x,w+x+dx),i,j)} & = & \lambda_1 e^{-\theta_1 w}  (i+1) \mu_1 e^{-((i+1) \mu_1 +j \mu_2)x}dx + \lambda_2  e^{-\theta_2 w}  (j+1) \mu_2  e^{-(i\mu_1+(j+1)\mu_2)x}dx ,\\
%&& \times \frac{(i+1) \mu_1}{(i+1) \mu_1  + j \mu_2 },\\
q_{(w,i,j),([w+x,w+x+dx),i-1,j+1)} & = & \lambda_2 e^{-\theta_2 w} i \mu_1 e^{-(i\mu_1+(j+1)\mu_2)x}dx .\\
%&& \times \frac{i \mu_1}{i \mu_1  + (j+1) \mu_2 } ,\\
% q_{(w,i,j),([w+x,w+x+dx),i,j)} & = & \lambda_2  e^{-\theta_2 w}  (j+1) \mu_2  e^{-(i\mu_1+(j+1)\mu_2)x}dx .\\
%&& \times \frac{(j+1) \mu_2}{i \mu_1  + (j+1) \mu_2 }.
\end{eqnarray*}
Between upward jumps, the $W$ process decreases continuously and deterministically at rate 1 while it is positive.
When $W$ reaches 0 in state $(0,i,j)$, the process will stay in this state until either an arrival or service completion occurs.

\begin{figure}[htb]
\begin{center}
\includegraphics[width=.85\linewidth]{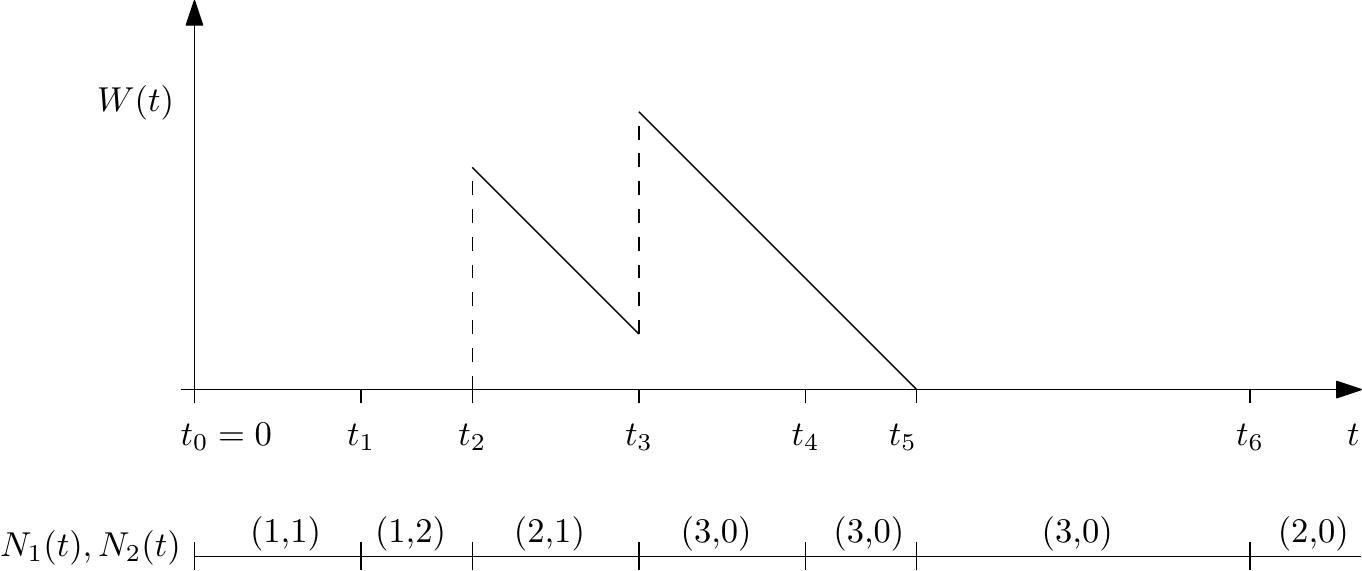}
\caption{Sample path of the virtual queueing time process of a system with 4 servers.
}\label{fig:sample}
\end{center}
\end{figure}

A sample path of the Markov process $\{(W(t),N_1(t),N_2(t)), t \ge 0\}$ is shown in Fig. \ref{fig:sample}. It describes the following events.  At time $t_0 = 0$ the system is in state $(0,1,1)$, which means that the virtual queueing time is 0, 2 servers are busy, one with a class 1 customer and the other with a class 2 customer. Then a class 2 customer arrives at time $t_1$, so the system states changes to state $(0,1,2)$. At time $t_2$ a class 1 customer arrives and  $W(t)$ jumps an
Exp($2\mu_1+2\mu_2$) amount. At time $t_2+W(t_2)$ a class 2 customer will complete service (from the two class 1 and two class 2 customers in service), so $(N_1(t),N_2(t))$ jumps to $(2,1)$ at time $t_2$. The next customer arrives at time $t_3$. He is from class 1 and his patience exceeds $W(t_3)$. So this customer joins the queue and $W(t)$ jumps an Exp($3\mu_1+\mu_2$) amount. At time $t_3+W(t_3)$ the class 2 customer will complete service before one of the three class 1 customers in service, so $(N_1(t),N_2(t))$ jumps to $(3,0)$ at time $t_3$. At time $t_4$ a class 2 customer arrives, but his patience is less than $W(t_4)$, and thus he leaves the system without receiving service. At time $t_5 = t_3+W(t_3)$ the class 2 customer completes service and the virtual queueing time reaches 0. The last event in Fig. \ref{fig:sample} occurs at time $t_6$. A class 1 customer completes service and the system state jumps to $(0,2,0)$.

We next determine the Laplace-Stieltjes transform (LST) of the virtual queueing time.

\subsection{Steady-state analysis}

We first introduce the notation
\begin{eqnarray*}
\psi_i(s,t)  &=& E(s^{-sW(t)};N_1(t) = i, N_2(t) = k-1-i), \quad 0 \le i \le k-1,\\
p_{i,j} (t) &=& P(W(t) = 0,   N_1(t) = i, N_2(t) = j), \quad 0 \le i+j \le k-1,\\
\phi_i(s,t) & = & \psi_i (s,t) - p_{i,k-1-i}(t), \quad  0 \le i \le k-1
\end{eqnarray*}
and
%\[ \psi_i(s) = \lim_{t \rightarrow \infty} \psi_i(s,t) = E(s^{-sW};N_1 = i, N_2=k-i),\]
\begin{eqnarray*}
\psi_i(s) & = & \lim_{t \rightarrow \infty} \psi_i(s,t)\\
& = & E(s^{-sW};N_1 = i, N_2 = k-1-i), \quad 0 \le i \le k-1, \\
p_{i,j} & = & \lim_{t \rightarrow \infty} p_{i,j} (t) \\
& = & P(W=0, N_1 = i, N_2 = j), \quad 0 \le i + j \le k-1,\\
\phi_i(s) & = & \psi_i (s) - p_{i,k-1-i} ,
\end{eqnarray*}
where $(W,N_1,N_2)$ is the limit (in distribution) of $(W(t),N_1(t),N_2(t))$ as $t \rightarrow \infty$.

We start with the balance equations for the steady-state probabilities $p_{i,j}$. Let
\[
\bp_n = [p_{0,n}\;\;p_{1,n-1}\;\;\cdots\;\;p_{n,0}] , \quad 0 \le n \le k-1.
\]
Then the balance equations can be written in vector-matrix form as
\begin{eqnarray}
\bp_0 (\lambda_1+\lambda_2) &=& \bp_1 M_1, \nonumber \\
\bp_n (\lambda_1+\lambda_2) + \bp_n \Delta_{n} &=& \bp_{n-1} \Lambda_{n-1} + \bp_{n+1} M_{n+1}, \quad 1 \le n < k-1, \label{eq:b1}
\end{eqnarray}
where the $(n+1) \times (n+1)$ matrix $\Delta_{\mu,n} = {\rm diag}(n\mu_2, \mu_1 + (n-1)\mu_2, \ldots, n\mu_1)$ and the non-zero elements of the $(n+1) \times (n+2)$ matrix $\Lambda_n = [\lambda_{n,i,j}]$ and $(n+1) \times n$ matrix $M_n = [\mu_{n,i,j}]$ are given by
\begin{eqnarray*}
%d_{n,i,i} & = & \lambda_1+\lambda_2+i \mu_1 + (n-i)\mu_2, \quad 0 \le i \le n,\\
\lambda_{n,i,i+1} & = & \lambda_1, \quad 0 \le i \le n,\\
\lambda_{n,i,i} & = & \lambda_2, \quad 0 \le i \le n,\\
 \mu_{n,i,i-1} & = & i\mu_1, \quad 1 \le i \le n,\\
 \mu_{n,i,i} & = & (n-i)\mu_2, \quad 0 \le i \le n-1.
\end{eqnarray*}
The above equations can be simplified to
\begin{equation}\label{eq:bb1}
\bp_{n} = \bp_{n+1} R_{n+1} , \quad 0 \le n < k-1.
\end{equation}
The $(n+1) \times n $ matrices $R_n$ in \eqref{eq:bb1} recursively follow from
\begin{eqnarray*}
R_1     &=& M_1 (\lambda_1+\lambda_2)^{-1}, \\
R_{n+1} &=& M_{n+1} ( (\lambda_1+\lambda_2)I + \Delta_n - R_n \Lambda_{n-1} )^{-1}, \quad 1 \le n < k-1 ,
\end{eqnarray*}
where $I$ denotes the identity matrix.

Now we proceed to derive differential equations for the time-dependent LSTs $\psi_i(s,t)$ and then take $t$ to infinity to obtain the steady-state equations. For small $h > 0$ we get
\begin{eqnarray*}
% \nonumber % Remove numbering (before each equation)
  \psi_i (s,t+h) &=&  (1-\lambda_1 h-\lambda_2 h) \phi_i (s,t) e^{sh}
  + (1-\lambda_1 h-\lambda_2 h
  -(i\mu_1 + (k-1-i)\mu_2)h) p_{i,k-1-i} (t)
  \\
  & & + \lambda_1 h (\phi_i (s,t) - \phi_i(s+\theta_1,t)) + \lambda_2 h (\phi_i (s,t) - \phi_i(s+\theta_2,t)) \\
  & & + \lambda_1 h \psi_i ( s+\theta_1,t) \frac{(i+1)\mu_1}{s+(i+1)\mu_1 + (k-1-i)\mu_2} \\
  & & + \lambda_1 h \psi_{i-1} ( s+\theta_1,t) \frac{(k-i)\mu_2}{s+i\mu_1 + (k-i)\mu_2} \\
  & & + \lambda_2 h \psi_i ( s+\theta_2,t) \frac{(k-i)\mu_2}{s+i\mu_1 + (k-i)\mu_2} \\
  & & + \lambda_2 h \psi_{i+1} ( s+\theta_2,t) \frac{(i+1)\mu_1}{s+(i+1)\mu_1 + (k-1-i)\mu_2} \\
  & &
  + \lambda_1 h p_{i-1,k-1-i} (t) + \lambda_2 h p_{i,k-2-i} (t)+ o(h),
\end{eqnarray*}
where, by convention, $p_{i,j} (t) = 0$ if $i < 0$ or $j <0$.
Inserting $e^{sh} = 1 + sh + o(h)$, rearranging terms and dividing by $h$ and then letting $h \rightarrow 0$, we obtain
\begin{eqnarray*}
 \frac{d}{dt}\psi_i (s,t) &= & s\phi_i (s,t) - \lambda_1 \psi_i(s+\theta_1,t) - \lambda_2 \psi_i(s+\theta_2,t)
  -(i\mu_1 + (k-1-i)\mu_2) p_{i,k-1-i} (t)  \\
  & & + \lambda_1 \psi_i ( s+\theta_1,t) \frac{(i+1)\mu_1}{s+(i+1)\mu_1 + (k-1-i)\mu_2} \\
  & & + \lambda_1 \psi_{i-1} ( s+\theta_1,t) \frac{(k-i)\mu_2}{s+i\mu_1 + (k-i)\mu_2} \\
  & & + \lambda_2 \psi_i ( s+\theta_2,t) \frac{(k-i)\mu_2}{s+i\mu_1 + (k-i)\mu_2} \\
  & & + \lambda_2 \psi_{i+1} ( s+\theta_2,t) \frac{(i+1)\mu_1}{s+(i+1)\mu_1 + (k-1-i)\mu_2} \\
  & & + \lambda_1 p_{i-1,k-1-i} (t) + \lambda_2 p_{i,k-2-i} (t).
\end{eqnarray*}
Now let $t \rightarrow \infty$. Then the system reaches steady state and $\frac{d}{dt}\psi_i(s,t) \rightarrow 0$, $\psi_i(s,t) \rightarrow \psi_i(s)$,
$\phi_i (s,t) \rightarrow \phi_i(s)$ and $p_{i,j} (t) \rightarrow p_{i,j}$, so
\begin{eqnarray}\label{eq:psii}
0 &= & s\phi_i (s) - \lambda_1 \psi_i(s+\theta_1) - \lambda_2 \psi_i(s+\theta_2) - (i\mu_1 + (k-1-i)\mu_2) p_{i,k-1-i}
\nonumber \\
  & & + \lambda_1 \psi_i ( s+\theta_1) \frac{(i+1)\mu_1}{s+(i+1)\mu_1 + (k-1-i)\mu_2} \nonumber \\
  & & + \lambda_1 \psi_{i-1} ( s+\theta_1) \frac{(k-i)\mu_2}{s+i\mu_1 + (k-i)\mu_2}\nonumber \\
  & & + \lambda_2 \psi_i ( s+\theta_2) \frac{(k-i)\mu_2}{s+i\mu_1 + (k-i)\mu_2} \nonumber\\
  & & + \lambda_2 \psi_{i+1} ( s+\theta_2) \frac{(i+1)\mu_1}{s+(i+1)\mu_1 + (k-1-i)\mu_2} \nonumber \\
  & & + \lambda_1 p_{i-1,k-1-i} + \lambda_2 p_{i,k-2-i}.
\end{eqnarray}
It is useful to rewrite the above equations in vector-matrix form. Let
\begin{eqnarray*}
\bpsi(s) &=& [\psi_0(s)\;\;\psi_1(s)\;\;\cdots\;\;\psi_{k-1}(s)],\\
%\bp_{k-1} &=& [p_{0,k-1}\;\;p_{1,k-2}\;\;\cdots\;\;p_{k-1,0}], \\
\bphi(s) &=& \bpsi(s) - \bp_{k-1} \\
        &=& [\phi_0(s)\;\;\phi_1(s)\;\;\cdots\;\;\phi_{k-1}(s)].
\end{eqnarray*}
Then (\ref{eq:psii}) can be written as
\[
 s\bphi(s) = \bp_{k-1} \Delta_{k-1} - \bp_{k-2} \Lambda_{k-2} + \bpsi(s+\theta_1)A_1(s) + \bpsi(s+\theta_2)A_2(s)
\]
and by substituting \eqref{eq:bb1},
\begin{equation}\label{eq:phi}
 s\bphi(s) =
 \bp_{k-1} \Delta_{k-1} - \bp_{k-1} R_{k-1} \Lambda_{k-2} +
 \bpsi(s+\theta_1)A_1(s) + \bpsi(s+\theta_2)A_2(s)
\end{equation}
where the non-zero elements of the $k \times k$ matrices $A_1(s) = [a_{1,i,j}(s)]$ and $A_2(s) = [a_{2,i,j}(s)]$ are given by
\begin{eqnarray*}
% \nonumber % Remove numbering (before each equation)
a_{1,i,i}(s)   &=& \lambda_1\frac{s+(k-1-i)\mu_2}{s+(i+1)\mu_1+(k-1-i)\mu_2}, \quad 0 \le i \le k-1, \\
a_{1,i-1,i}(s) &=& -\lambda_1\frac{(k-i)\mu_2}{s+i\mu_1+(k-i)\mu_2},          \quad 0 <   i \le k-1, \\
a_{2,i,i}(s)   &=& \lambda_2\frac{s+i\mu_1}{s+i\mu_1+(k-i)\mu_2},             \quad 0 \le i \le k-1, \\
a_{2,i+1,i}(s) &=& -\lambda_2\frac{(i+1)\mu_1}{s+(i+1)\mu_1+(k-1-i)\mu_2},    \quad 0 \le i <   k-1.
\end{eqnarray*}
For $s > 0$, we can divide Eq. (\ref{eq:phi}) by $s$ and use the notation
\begin{equation} \label{eq:qs}
D(s) = I + \frac{\Delta_{k-1} - R_{k-1} \Lambda_{k-2}}s,   \quad
H_i (s) = \frac{A_i(s)}{s}, \quad i = 1, 2,
%H_2 (s) \; = \; \frac{B(s)}{s} ,
\end{equation}
to obtain
\begin{equation}\label{eq:psi0}
\bpsi (s) =  \bp_{k-1} D(s) +  \bpsi(s+\theta_1) H_1(s) + \bpsi(s+\theta_2) H_2 (s) ,
\end{equation}
This equation is suitable to recursively determine $\bpsi(s)$ for $s >0$.
Let
\[
D_{i,j}(s) =D(s+i\theta_1+j\theta_2), \quad \bpsi_{i,j}(s) = \bpsi(s+i\theta_1 + j\theta_2), \quad i,j \ge 0.
\]
Then Eq. \eqref{eq:psi0} yields for $i, j \ge 0$,
\begin{equation}
 \bpsi_{i,j}(s) = \bp_{k-1}D_{i,j}(s)
 + \bpsi_{i+1,j}(s)H_1(s+i\theta_1+j\theta_2)
 + \bpsi_{i,j+1}(s)H_2(s+i\theta_1+j\theta_2). \label{eq:psirec}
 \end{equation}
To obtain $\bpsi(s) = \bpsi_{0,0}(s)$ we can repeatedly apply the above equation:
\begin{eqnarray*}
 \bpsi_{0,0}(s) &=& \bp_{k-1} D_{0,0}(s) + \bpsi_{1,0}(s)H_1(s) + \bpsi_{0,1}(s)H_2(s) \\
 & = &
  \bp_{k-1} ( D_{0,0}(s)+ D_{1,0} (s) H_1(s) + D_{0,1} (s) H_2(s))
 + \bpsi_{2,0}(s)H_1(s+\theta_1)H_1(s) \\
 &&
 + \bpsi_{0,2}(s)H_2(s+\theta_2)H_2(s)
 + \bpsi_{1,1}(s) (H_2(s+\theta_1)H_1(s)+H_1(s+\theta_2)H_2(s))
\end{eqnarray*}
and so on.
%Here $I$ denotes the $k \times k$ identity matrix.
This results after $n$ iterations in the following expression for $\bpsi(s)$:
\begin{equation} \label{eq:psis02}
\bpsi(s) = \bp_{k-1} \sum_{i+j < n} D_{i,j} (s) C_{i,j} (s) + \sum_{i+j=n} \bpsi_{i,j} (s) C_{i,j} (s) .
\end{equation}
The $k \times k$ matrices $C_{i,j}(s)$ are defined as follows. A sequence of grid points $\bp = \{(i_0,j_0), (i_1,j_1), \ldots, (i_n, j_n)\}$ is called a path from $(i_0,j_0)$ to $(i_n,j_n)$ if each of its steps $(i_{l+1}, j_{l+1}) - (i_{l},j_{l})$ are either $(1,0)$ or $(0,1)$. For path $\bp$, we introduce (see Fig. \ref{fig:path})
\[
C_\bp (s) = H_{i_{n-1},j_{n-1}} (s) \cdots H_{i_{1},j_{1}} (s) H_{i_{0},j_{0}} (s)
\]
where for $l = 0, \ldots, n-1$,
\[
H_{i_l,j_l} (s) = \left\{
\begin{array}{l l}
H_1 (s+i_l \theta_1 + j_l \theta_2) & \mbox{ if $(i_{l+1}, j_{l+1}) - (i_{l},j_{l}) = (1,0)$,}\\
H_2 (s+i_l \theta_1 + j_l \theta_2) & \mbox{ if $(i_{l+1}, j_{l+1}) - (i_{l},j_{l}) = (0,1)$.}\\
\end{array}
\right.
\]

\begin{figure}[htb]
\begin{center}
\includegraphics[width=0.6\linewidth]{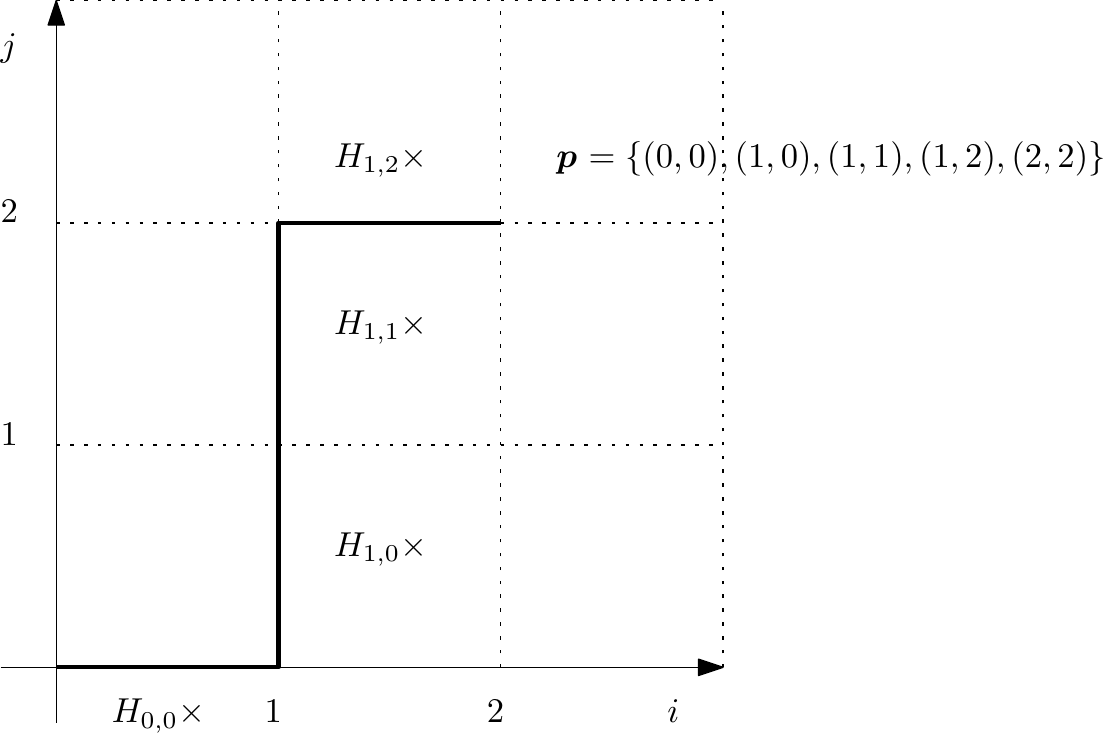}
\caption{Path $\bp = \{ (0,0), (1,0), (1,1), (1,2), (2,2) \}$ with $C_{\bp} (s) = H_{1,2}(s)H_{1,1}(s)H_{1,0}(s)H_{0,0}(s).$}
\label{fig:path}
\end{center}
\end{figure}

For path $\bp = \{ (i_0,j_0)\}$, we set $C_\bp (s) = I$.
Let $\mathcal{P}(i,j)$ be the set of all paths from $(0,0)$ to $(i,j)$. Then $C_{i,j} (s)$ is defined as
\[
C_{i,j} (s) = \sum_{\bp \in \mathcal{P}(i,j)} C_\bp (s) .
\]
Note that for $i+j > 0$, the $k \times k$ matrices $C_{i,j}(s)$ can be recursively calculated from
\begin{eqnarray}\label{eq:cij}
C_{i,j}(s) &=&  \sum_{\bp \in \mathcal{P}(i-1,j)} H_{i-1,j} (s) C_\bp (s) + \sum_{\bp \in \mathcal{P}(i,j-1)} H_{i,j-1} (s) C_\bp (s) \nonumber \\
& = &
{H}_1(s+(i-1)\theta_1+j\theta_2) C_{i-1,j}(s)  + {H}_2(s+ i\theta_1+(j-1)\theta_2) C_{i,j-1}(s),
\end{eqnarray}
where $C_{0,0} (s) = I$ and
$C_{i,j} (s)$ is the all zero matrix if $i<0$ or $j<0$.
The following lemma states that the series of $C_{i,j} (s)$ is well defined for all $s >0$.

\begin{lemma}\label{lem:C}
For each $\delta >0$, the series $\sum_{i=0}^\infty\sum_{j=0}^\infty C_{i,j}(s)$ is absolutely and uniformly convergent for all $s>\delta$.
\end{lemma}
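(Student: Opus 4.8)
The plan is to majorize the matrix series by a scalar series and apply the ratio test, exploiting the fact that the patience rates $\theta_1,\theta_2$ are strictly positive. Throughout I would use the maximum absolute row sum norm $\|M\| = \max_i \sum_j |M_{i,j}|$, which is submultiplicative. The first step is a uniform bound on $\|A_i(u)\|$. Inspecting the entries $a_{1,i,j}(s)$ and $a_{2,i,j}(s)$ defined after Eq.~\eqref{eq:phi}, each row of $A_1(u)$ has at most two nonzero entries, the diagonal $a_{1,i,i}(u)$ and the super-diagonal $a_{1,i,i+1}(u)$, and each is bounded in absolute value by $\lambda_1$ for every $u>0$, because in each case the numerator is dominated by the denominator. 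Hence $\|A_1(u)\|\le 2\lambda_1$, and likewise $\|A_2(u)\|\le 2\lambda_2$, uniformly in $u>0$. Writing $\bar\lambda=\max\{\lambda_1,\lambda_2\}$ and recalling from \eqref{eq:qs} that $H_i(u)=A_i(u)/u$, this gives the uniform estimate
\[
\|H_i(u)\| \le \frac{2\bar\lambda}{u}, \qquad u>0,\ i=1,2.
\]

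The second step is to bound each path contribution. Fix $(i,j)$ and a path $\bp\in\mathcal{P}(i,j)$ with $n=i+j$ steps. At its $l$-th grid point $(i_l,j_l)$ one has $i_l+j_l=l$, so the shift satisfies $i_l\theta_1+j_l\theta_2 \ge l\,\theta_{\min}$ with $\theta_{\min}=\min\{\theta_1,\theta_2\}>0$. Submultiplicativity together with the bound above then yields
\[
\|C_\bp(s)\| \le \prod_{l=0}^{n-1}\|H_{i_l,j_l}(s)\| \le \prod_{l=0}^{n-1}\frac{2\bar\lambda}{s+l\theta_{\min}} = \frac{(2\bar\lambda)^n}{\prod_{l=0}^{n-1}(s+l\theta_{\min})}.
\]
Since $|\mathcal{P}(i,j)|=\binom{n}{i}$, summing over the anti-diagonal $i+j=n$ and using $\sum_{i=0}^n\binom{n}{i}=2^n$ gives, for $s>\delta$,
\[
\sum_{i+j=n}\|C_{i,j}(s)\| \le \frac{(4\bar\lambda)^n}{\prod_{l=0}^{n-1}(s+l\theta_{\min})} \le \frac{(4\bar\lambda)^n}{\prod_{l=0}^{n-1}(\delta+l\theta_{\min})} =: M_n,
\]
where the final majorant no longer depends on $s$.

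The last step is convergence of the scalar series $\sum_n M_n$, which follows from the ratio test since $M_{n+1}/M_n = 4\bar\lambda/(\delta+n\theta_{\min}) \to 0$ as $n\to\infty$: the factorial-type growth of $\prod_{l=0}^{n-1}(\delta+l\theta_{\min})$ dominates the geometric numerator $(4\bar\lambda)^n$. By the Weierstrass $M$-test, $\sum_{i,j}C_{i,j}(s)$ then converges absolutely and uniformly on $\{s>\delta\}$. The step I expect to carry the weight is the observation that the arguments of the $H_i$-factors increase by at least $\theta_{\min}>0$ at each step along a path; this is exactly what converts the product of norms into a $1/n!$-type decay that beats the $2^n$ growth in the number of paths. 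Positivity of the patience rates is essential: were the shifts absent, the arguments would not grow, the per-factor bound would stay constant, and the majorant series could diverge. The remaining ingredients (the elementary entrywise bounds and the path count $\binom{n}{i}$) are routine by comparison.
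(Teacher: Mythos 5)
Your proof is correct, but it takes a genuinely different route from the paper's. The paper works directly with the recursion \eqref{eq:cij}: it fixes $r<1$, uses $|H_l(s)|\le \lambda_l E/s$ to find a threshold $N$ beyond which $|H_l(s+i\theta_1+j\theta_2)|\le \frac{r}{2k}E$ entrywise, absorbs the finitely many terms with $i+j\le N$ into a large constant $M$, and then propagates the entrywise geometric bound $|C_{i,j}(s)|\le M r^{i+j}E$ by induction on $i+j$. You instead expand $C_{i,j}(s)$ over its defining paths, bound each factor of the product $C_{\bp}(s)$ by $2\bar\lambda/(s+l\theta_{\min})$ via submultiplicativity of the induced row-sum norm (your entrywise bounds on $A_1$, $A_2$ check out), and count the $\binom{i+j}{i}$ paths, arriving at the $s$-independent majorant $(4\bar\lambda)^n/\prod_{l=0}^{n-1}(\delta+l\theta_{\min})$ for the $n$-th anti-diagonal; the ratio test and the Weierstrass $M$-test then finish. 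Both arguments exploit the same mechanism --- the arguments of the $H$-factors grow linearly in $i+j$ because $\theta_1,\theta_2>0$, which is exactly the point you flag as carrying the weight --- but yours is quantitatively sharper (factorial-type rather than geometric decay of the tail) and dispenses with the induction and the threshold-plus-large-constant bookkeeping. What the paper's inductive scheme buys in exchange is reusability: it transfers almost verbatim to the derivative series in Lemma \ref{lem:Cp}, where the recursion for $C'_{i,j}$ mixes $C$ and $C'$ terms, whereas a path-based argument there would require differentiating products along paths (a sum over which factor is differentiated), which is noticeably messier.
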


\begin{proof}
Fix $\delta > 0$. It suffices to show that there are constants $M$ and $r < 1$ such that for all $s>\delta$ and $i+j \ge 0$,
\begin{equation}\label{eq:bound1}
  |C_{i,j}(s)| \le M r^{i+j} E,
\end{equation}
where $E$ is the all one matrix and the inequality is component wise. In this bound, $s$ needs to stay away from zero, since
$H_1(s)$ and $H_2 (s)$, and thus by \eqref{eq:cij}, also $C_{i,j} (s)$ are unbounded as $s$ approaches zero.
Now fix $r < 1$. Since $|H_l (s)| \le \lambda_l E / s$ ($l = 1, 2$), %and $|H_2 (s)| \le \lambda_2 E / s$,
there is an $N \ge 0$ such that for $s > \delta$ and $i+j \ge N$,
\begin{equation}\label{eq:bound2}
|H_l (s+i\theta_1+j\theta_2)| \le \frac{r}{2k} E , \quad l = 1, 2.
%|H_2 (s+i\theta_1+j\theta_2)| \le \frac{r}{2k} E .
\end{equation}
Recursion \eqref{eq:cij} implies that for each $i,j \ge 0$, $C_{i,j} (s)$ is bounded for $s > \delta$. Hence, there is a (sufficiently large) $M$ such that \eqref{eq:bound1} is valid for $s > \delta$ and the finitely many $i+j \le N$. By induction we now prove that \eqref{eq:bound1} is valid for all $i+j \ge N$. Suppose it holds for all $i+j = n$ (which is true for $n=N$).
From \eqref{eq:cij} and \eqref{eq:bound2} we get for $s > \delta$ and $i+j = n+1$,
\begin{eqnarray*}
% \nonumber % Remove numbering (before each equation)
  |C_{i,j}(s)| & \le &  |{H}_1(s+(i-1)\theta_1+j\theta_2)| |C_{i-1,j}(s)| +  |{H}_2(s+ i\theta_1+(j-1)\theta_2)| |C_{i,j-1}(s)|\\
  & \le & \frac{r}{2k} E \left(|C_{i-1,j}(s)| +  |C_{i,j-1}(s)| \right) \\
  & \le & \frac{r}{2k} 2 M r^{n} E^2 \; = \; M r^{n+1} E,
\end{eqnarray*}
where the last inequality follows from the induction hypothesis.\qed
\end{proof}

Since $D_{ij} (s)$ are uniformly bounded for all $s>\delta >0$ and $i+j \ge 0$, we immediately get the following.

\begin{corollary}\label{cor:abs}
The series $\sum_{i=0}^\infty\sum_{j=0}^\infty D_{i,j} (s) C_{i,j}(s)$ is absolutely and uniformly convergent for all $s>\delta >0$.
\end{corollary}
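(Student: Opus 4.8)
The plan is to reduce the corollary to Lemma \ref{lem:C} by supplying a uniform entrywise bound on the matrices $D_{i,j}(s)$ and then controlling the matrix products term by term. The geometric decay in $i+j$ is already provided by the lemma, so all that remains is to verify that multiplying by $D_{i,j}(s)$ does not destroy it.

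First I would make the uniform boundedness of $D_{i,j}(s)$ explicit. By the definition in \eqref{eq:qs}, $D(u) = I + (\Delta_{k-1} - R_{k-1}\Lambda_{k-2})/u$, so $D_{i,j}(s) = D(s + i\theta_1 + j\theta_2)$. Since $\theta_1,\theta_2 \ge 0$, for every $s > \delta$ the argument satisfies $s + i\theta_1 + j\theta_2 > \delta$, and hence, entrywise, $|D_{i,j}(s)| \le I + |\Delta_{k-1} - R_{k-1}\Lambda_{k-2}|/\delta$. The right-hand side is a fixed finite matrix independent of $i,j$ and $s$, so there is a constant $B$ with $|D_{i,j}(s)| \le B E$ for all $i,j \ge 0$ and all $s > \delta$, where $E$ is the all-one matrix.

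Second I would combine this with the bound $|C_{i,j}(s)| \le M r^{i+j} E$ of Lemma \ref{lem:C} (valid for $s > \delta$ with $r < 1$). Using the entrywise estimate $|AB| \le k\,ab\,E$ whenever $|A| \le aE$ and $|B| \le bE$, which follows from $E^2 = kE$ for $k \times k$ matrices, I obtain $|D_{i,j}(s) C_{i,j}(s)| \le kBM\,r^{i+j} E$ for all $s > \delta$. Summing the geometric series then yields $\sum_{i,j} |D_{i,j}(s) C_{i,j}(s)| \le kBM E/(1-r)^2$, a finite bound independent of $s$. By the Weierstrass M-test, applied entry by entry, the series $\sum_{i,j} D_{i,j}(s) C_{i,j}(s)$ converges absolutely and uniformly on $\{s > \delta\}$.

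There is no substantial obstacle here: Lemma \ref{lem:C} carries the entire burden of the argument by supplying the geometric decay, and the corollary follows as a routine consequence. The only points requiring care are the dependence of the constant $B$ on $\delta$ (the bound degenerates as $\delta \downarrow 0$ because of the $1/u$ term in $D(u)$, mirroring exactly the restriction already present in the lemma) and the harmless factor $k$ introduced by each matrix multiplication through $E^2 = kE$.
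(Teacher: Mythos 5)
Your proof is correct and follows essentially the same route as the paper, which simply observes that the $D_{i,j}(s)$ are uniformly bounded for $s>\delta$ and $i+j\ge 0$ and then invokes the geometric bound $|C_{i,j}(s)| \le M r^{i+j}E$ from Lemma \ref{lem:C}. You have merely spelled out the routine details (the explicit bound $|D_{i,j}(s)| \le BE$, the entrywise product estimate via $E^2 = kE$, and the Weierstrass M-test) that the paper compresses into the word ``immediately.''
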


Using that $|\psi_{i,j}(s)| \le 1$, the second term in \eqref{eq:psis02} is bounded by
\[
\left| \sum_{i+j=n} \bpsi_{i,j} (s) C_{i,j} (s) \right| \le \sum_{i+j=n} | C_{i,j} (s) | .
\]
So it vanishes as $n \rightarrow \infty$ by virtue of the absolute convergence of the series of $C_{i,j} (s)$. Hence, taking $n \rightarrow \infty$ in \eqref{eq:psis02}, we get from
Corollary \ref{cor:abs},
\begin{equation} \label{eq:psis2}
\bpsi(s) = \bp_{k-1} (s)  C(s), %\quad s > 0,
\end{equation}
where
\begin{equation}\label{eq:Cs}
C(s) = \sum_{i=0}^\infty\sum_{j=0}^\infty D_{i,j} (s) C_{i,j}(s) .
\end{equation}
In particular, we have
\begin{equation}\label{eq:psitheta12}
\bpsi(\theta_i) = \bp_{k-1} C(\theta_i), \quad i = 1, 2.
\end{equation}
To complete the LST of the virtual queueing time, we need to determine $\bp_{k-1}$. First we set $s=0$ in (\ref{eq:phi}) yielding
\begin{eqnarray} \label{eq:pk}
0 & = & \bp_{k-1} \Delta_{k-1} - \bp_{k-1} R_{k-1}\Lambda_{k-2} + \sum_{i=1}^2 \bpsi(\theta_i)A_i(0)
         \nonumber \\
  & = & \bp_{k-1} \Delta_{k-1} - \bp_{k-1} R_{k-1}\Lambda_{k-2} +  \bp_{k-1} \sum_{i=1}^2 C(\theta_i)A_i(0),
\end{eqnarray}
where the second equality follows from \eqref{eq:psitheta12}.
%{\color{red} Note that these equations can be interpreted as global balance equations for the sets $\{(w, i, k-1-i), w \ge 0\}$.}
%Together with Eqs. \eqref{eq:bb1}, they form a system of homogeneous linear equations that determine $\bp_n$ for $0 \le n \le k-1$ up to a normalizing constant.
%Further, the steady-state probabilities $p_{i,j}$ for $0\le i+j < k-1$ satisfy the balance equations
%\begin{equation}\label{eq:pij}
%  p_{i,j} (\lambda_1+\lambda_2+\mu_1 \epsilon_i + \mu_2 \epsilon_j)  =  p_{i-1,j} \lambda_1 \epsilon_i + p_{i,j-1} %\lambda_2 \epsilon_j + p_{i+1,j} \mu_1 + p_{i,j+1} \mu_2 ,
%\end{equation}
%where $\epsilon_i = 1$ if $i > 0$ and $0$ otherwise.
To uniquely determine $\bp_{k-1}$, we finally need the normalizing equation
\begin{equation} \label{eq:nor}
\sum_{n = 0}^{k-1} \bp_{n}\be + \bphi(0) \be = 1,
\end{equation}
where $\be$ is the all one vector and $\bp_n$ is given by \eqref{eq:bb1} for $0\le n < k-1$.
%Eqs. (\ref{eq:pk})-(\ref{eq:nor}) determine the probabilities $p_{i,j}$ for all $0 \le i+j \le k-1$, and thus completes the solution.
However, Eq. (\ref{eq:nor}) requires the computation of $\bphi(0)$, which is the hard step. Taking the derivatives on both sides of (\ref{eq:phi}) and setting $s=0$, we get
\begin{equation}\label{eq:phi0}
\bphi(0) = \sum_{i=1}^2 \left( \bpsi(\theta_i)A_i'(0) + \bpsi'(\theta_i)A_i(0) \right).
\end{equation}
Here prime indicates derivative with respect to $s$. Thus, to calculate $\bphi(0)$ we need $\bpsi'(s)$ at $s=\theta_1$ and $s=\theta_2$. For this we can use (\ref{eq:psis2}). Differentiating (\ref{eq:psis2}) yields
\begin{equation}\label{eq:psis'}
\bpsi'(s) =  \bp_{k-1} C'(s) = \bp_{k-1}  \sum_{i=0}^\infty\sum_{j=0}^\infty \left( D'_{i,j} (s) C_{i,j} (s) + D_{i,j} (s) C'_{i,j}(s) \right).
\end{equation}
The terms $C'_{i,j} (s)$ can be recursively computed by differentiating (\ref{eq:cij}),
\begin{eqnarray} \label{eq:cpij}
C'_{i,j}(s) & = &  {H}_1(s+(i-1)\theta_1+j\theta_2)C'_{i-1,j}(s) + {H}_2(s+ i\theta_1+(j-1)\theta_2) C'_{i,j-1}(s) \nonumber \\
            &   &  + {H}'_1(s+(i-1)\theta_1+j\theta_2)C_{i-1,j}(s)  +  {H}'_2(s+ i\theta_1+(j-1)\theta_2) C_{i,j-1}(s) , \nonumber
\end{eqnarray}
where
$C'_{i,j} (s)$ is the all zero matrix if $i=j=0$ or if $i<0$ or $j<0$.
Term by term differentiation of (\ref{eq:Cs}) is justified by the following two lemmas.

\begin{lemma}\label{lem:Cp}
For each $\delta >0$, the series $\sum_{i=0}^\infty\sum_{j=0}^\infty C'_{i,j}(s)$ is absolutely and uniformly convergent for all $s>\delta$.
\end{lemma}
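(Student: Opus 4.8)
The plan is to mirror the induction used in the proof of Lemma~\ref{lem:C}, but now carrying an inhomogeneous ``forcing'' term generated by the derivatives $H_l'$. Fix $\delta>0$ and a ratio $r<1$ for which Lemma~\ref{lem:C} supplies constants $M,N$ with $|C_{i,j}(s)|\le M r^{i+j}E$ for all $s>\delta$ and $i+j\ge 0$ (componentwise, $E$ the all-one matrix), and with $|H_l(s+i\theta_1+j\theta_2)|\le \frac{r}{2k}E$ for $l=1,2$ once $i+j\ge N$, as in \eqref{eq:bound1}--\eqref{eq:bound2}. The first preliminary step is to record a uniform bound on the derivatives. Since each entry of $A_l(s)$ is a rational function whose denominator is bounded away from $0$ for $s>\delta$, both $A_l(s)$ and $A_l'(s)$ have entries bounded on $s>\delta$; writing $H_l'(s)=A_l'(s)/s-A_l(s)/s^2$ then yields a constant $K_0$ with $|H_l'(s)|\le K_0 E$ for all $s>\delta$ and $l=1,2$.

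Next I would read off from the differentiated recursion~\eqref{eq:cpij} the componentwise estimate
\[
|C_{i,j}'(s)| \le |H_1|\,|C_{i-1,j}'(s)| + |H_2|\,|C_{i,j-1}'(s)| + |H_1'|\,|C_{i-1,j}(s)| + |H_2'|\,|C_{i,j-1}(s)|,
\]
where the arguments of $H_1,H_1'$ are $s+(i-1)\theta_1+j\theta_2$ and those of $H_2,H_2'$ are $s+i\theta_1+(j-1)\theta_2$. The last two terms constitute the forcing; using $|H_l'|\le K_0 E$, the bound $|C_{i-1,j}|,|C_{i,j-1}|\le M r^{i+j-1}E$ from Lemma~\ref{lem:C}, and $E^2=kE$, they are dominated by $c_0 r^{i+j}E$ with $c_0 = 2K_0 M k/r$.

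The key point, and the one place the argument genuinely departs from Lemma~\ref{lem:C}, is the choice of ansatz: a pure geometric bound $M' r^{i+j}E$ cannot close, because the forcing injects an extra $c_0 r^{i+j}E$ at every level and this contribution accumulates linearly in $i+j$. I would therefore prove by induction on $n=i+j$ that $|C_{i,j}'(s)|\le M'(i+j+1)r^{i+j}E$ for $s>\delta$ and $i+j\ge N$, with $M'$ chosen large. For the base levels $i+j\le N$ the finitely many $C_{i,j}'(s)$ are bounded on $s>\delta$ (by induction on $i+j$ in~\eqref{eq:cpij}, every ingredient being bounded), so $M'$ can be taken large enough to start. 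For the inductive step, feeding the level-$n$ bound and the homogeneous control $|H_l|\le\frac{r}{2k}E$ into the displayed estimate gives $M'(n+1)r^{n+1}E + c_0 r^{n+1}E$, which is at most $M'(n+2)r^{n+1}E$ as soon as $c_0\le M'$; thus any $M'\ge\max(c_0,\text{base constant})$ works uniformly in $n$. Finally, since $\sum_{i,j}(i+j+1)r^{i+j}$ converges for $r<1$, the bound delivers absolute and uniform convergence of $\sum_{i,j}C_{i,j}'(s)$ on $s>\delta$. The main obstacle is exactly recognizing that the forcing compels the polynomial correction factor $(i+j+1)$; establishing the uniform bound on $H_l'$ and closing the induction are then routine adaptations of Lemma~\ref{lem:C}.
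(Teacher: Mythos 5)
Your proof is correct as written: the uniform bound $|H'_l(s)|\le K_0E$ on $s>\delta$, the inhomogeneous recursion estimate, and the polynomially corrected ansatz $|C'_{i,j}(s)|\le M'(i+j+1)r^{i+j}E$ do close the induction (the step needs only $c_0\le M'$), and $\sum_n (n+1)^2 r^n<\infty$ then gives absolute and uniform convergence. But the route differs from the paper's at precisely the point you single out as forced. The paper does not treat the derivative terms as a merely bounded forcing: it observes that $H'_l(s)\le \lambda_l E/s^2$, so that $|H'_l(s+i\theta_1+j\theta_2)|$, just like $|H_l(s+i\theta_1+j\theta_2)|$, can be made smaller than $\frac{r}{4k}E$ once $i+j\ge N$ (see \eqref{eq:bound4}). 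With all four coefficients in \eqref{eq:cpij} of size $\frac{r}{4k}E$, the paper runs the induction of Lemma \ref{lem:C} simultaneously on \eqref{eq:bound1} and \eqref{eq:bound3} with a common constant $M$, and the pure geometric bound does close: the four terms contribute $\frac{r}{4k}\cdot 4Mr^nE^2=Mr^{n+1}E$. So your claim that ``a pure geometric bound cannot close'' is an artifact of your coarser, non-decaying bound on $H'_l$, not a feature of the problem. As for what each approach buys: yours needs only boundedness of $H'_l$ away from $s=0$ (no decay estimate on the rational entries of $A_l$), at the price of the extra polynomial factor; the paper's keeps statement and proof exactly parallel to Lemma \ref{lem:C}, and its clean geometric bound \eqref{eq:bound3} is what is invoked verbatim as \eqref{eq:bound5} in the proof of Lemma \ref{lem:Cp2}. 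Your bound would serve there as well, since $(i+j+1)r^{i+j}$ is still summable, but that downstream argument would have to be restated with the modified dominating sequence.
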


\begin{proof}
The proof is similar to the proof of Lemma \ref{lem:C}.
Fix $\delta > 0$. It suffices to show that there are constants $M$ and $r < 1$ such that for all $s>\delta$ and $i+j \ge 0$,
\begin{equation}\label{eq:bound3}
  |C'_{i,j}(s)| \le M r^{i+j} E.
\end{equation}
First fix $r < 1$. Since $H_l (s) \le \lambda_l E / s$ and $H'_l (s) \le \lambda_l E / s^2$ ($l = 1, 2$), there is an $N \ge 0$ such that for $s > \delta$ and $i+j \ge N$,
\begin{equation}\label{eq:bound4}
|H_l (s+i\theta_1+j\theta_2)| \le \frac{r}{4k} E , \quad
|H'_l (s+i\theta_1+j\theta_2)| \le \frac{r}{4k} E , \quad l = 1,2 .
\end{equation}
Recursions \eqref{eq:cij} and \eqref{eq:cpij} imply that for each $i,j \ge 0$, $C_{i,j} (s)$ and $C'_{i,j} (s)$ are bounded for $s > \delta$. Hence, there is a (sufficiently large) $M$ such that both \eqref{eq:bound1} and \eqref{eq:bound3} are valid for $s > \delta$ and the finitely many $i+j \le N$. Following the induction steps in the proof of Lemma \ref{lem:C} it follows that \eqref{eq:bound1} is valid for all $i+j \ge N$. We now show that also \eqref{eq:bound3} holds for all $i+j \ge N$.
Suppose that \eqref{eq:bound3} holds for all $i+j = n$ (which is true for $n=N$).
From \eqref{eq:cpij} and \eqref{eq:bound4} we get for $i+j = n+1$,
\begin{eqnarray*}
% \nonumber % Remove numbering (before each equation)
  |C'_{i,j}(s)| & \le & |{H}_1(s+(i-1)\theta_1+j\theta_2)||C'_{i-1,j}(s)|  +  |{H}_2(s+ i\theta_1+(j-1)\theta_2)||C'_{i,j-1}(s)| \\
                &  +  & |{H}'_1(s+(i-1)\theta_1+j\theta_2)||C_{i-1,j}(s)|  + |{H}'_2(s+ i\theta_1+(j-1)\theta_2)| |C_{i,j-1}(s)| \\
  & \le & \frac{r}{4k} E \left(|C'_{i-1,j}(s)| +  |C'_{i,j-1}(s)| + |C_{i-1,j}(s)| +  |C_{i,j-1}(s)| \right) \\
  & \le & \frac{r}{4k} 4 M r^{n} E^2 \; = \; M r^{n+1} E,
\end{eqnarray*}
where the last inequality follows form the induction hypothesis. \qed
\end{proof}

\begin{lemma}\label{lem:Cp2}
For each $s >0$, the derivative of $C(s)$ exists and is equal to
\[
C'(s) = \sum_{i=0}^\infty\sum_{j=0}^\infty \left( D'_{i,j} (s) C_{i,j} (s) + D_{i,j} (s) C'_{i,j}(s) \right) .
\]
\end{lemma}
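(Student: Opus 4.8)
The plan is to invoke the classical theorem on term-by-term differentiation of series: if a series of differentiable functions converges at some point and the series of derivatives converges uniformly on an interval, then the sum is differentiable there and its derivative is the termwise-differentiated series. I would apply this componentwise to the $k \times k$ matrix-valued double series $C(s) = \sum_{i,j} D_{i,j}(s) C_{i,j}(s)$, treating each of the $k^2$ scalar entries separately. Because every series in sight is absolutely convergent (Corollary~\ref{cor:abs} and Lemmas~\ref{lem:C} and \ref{lem:Cp}), I may reindex the double sum as an ordinary single series, for instance by diagonals $i+j = n$, so that the one-index version of the theorem applies directly.

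First I would record that each summand $D_{i,j}(s) C_{i,j}(s)$ is differentiable on $(0,\infty)$. The matrices $D(s)$, $H_1(s)$ and $H_2(s)$ are rational in $s$ with poles only at nonpositive arguments, so $D_{i,j}(s) = D(s+i\theta_1+j\theta_2)$ and the factors making up $C_{i,j}(s)$ are smooth for $s>0$; since $C_{i,j}(s)$ is a finite sum of finite products of such factors, the product rule gives derivative $D'_{i,j}(s) C_{i,j}(s) + D_{i,j}(s) C'_{i,j}(s)$, which is precisely the $(i,j)$ term of the asserted series.

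Next I would fix $\delta > 0$ and show the derivative series converges absolutely and uniformly on $\{s > \delta\}$, splitting it into two pieces. For $\sum_{i,j} D_{i,j}(s) C'_{i,j}(s)$, the matrices $D_{i,j}(s)$ are uniformly bounded for $s > \delta$ since the argument $s+i\theta_1+j\theta_2$ stays away from the poles, while $\sum_{i,j} |C'_{i,j}(s)|$ converges uniformly by Lemma~\ref{lem:Cp}; hence this piece is dominated by a uniformly convergent series. For $\sum_{i,j} D'_{i,j}(s) C_{i,j}(s)$, differentiating $D(s) = I + (\Delta_{k-1} - R_{k-1}\Lambda_{k-2})/s$ gives $D'(s) = -(\Delta_{k-1} - R_{k-1}\Lambda_{k-2})/s^2$, so $D'_{i,j}(s) = D'(s+i\theta_1+j\theta_2)$ is again uniformly bounded for $s > \delta$, and $\sum_{i,j} |C_{i,j}(s)|$ converges uniformly by Lemma~\ref{lem:C}. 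Both pieces, and therefore the full derivative series, are thus uniformly convergent on $(\delta,\infty)$.

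With these two ingredients the differentiation theorem applies entrywise on $(\delta,\infty)$ and yields the stated formula for $C'(s)$ there; since $\delta > 0$ is arbitrary, the conclusion holds for every $s > 0$. I expect no serious obstacle, because the two preceding lemmas have already supplied the uniformly convergent majorants that constitute the analytic core. The only remaining work is the routine verification of uniform boundedness of $D_{i,j}(s)$ and $D'_{i,j}(s)$ for $s > \delta$, together with the bookkeeping that lets the scalar differentiation theorem be applied entry by entry to a matrix-valued double series.
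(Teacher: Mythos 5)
Your proposal is correct, and it rests on exactly the same analytic ingredients as the paper's proof: the geometric bounds $|C_{i,j}(s)| \le M r^{i+j} E$ and $|C'_{i,j}(s)| \le M r^{i+j} E$ of Lemmas \ref{lem:C} and \ref{lem:Cp}, together with the uniform boundedness of $D_{i,j}(s)$ and $D'_{i,j}(s)$ on $\{s>\delta\}$. The difference is one of packaging. You reindex the double series as a single series (legitimate, by absolute convergence, and with uniform convergence preserved for any enumeration since the dominating series $\sum_{i,j} M r^{i+j}$ is a series of constants), check that each term $B_{i,j}(s)=D_{i,j}(s)C_{i,j}(s)$ is differentiable with derivative $D'_{i,j}(s)C_{i,j}(s)+D_{i,j}(s)C'_{i,j}(s)$, and then invoke the classical theorem on term-by-term differentiation under uniform convergence of the derivative series. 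The paper instead proves the interchange from scratch, directly for the double series: it fixes a sequence $h_n \to 0$ with $s+h_n>0$, applies the mean value theorem entrywise to write the difference quotient $\bigl(B_{i,j}(s+h_n)-B_{i,j}(s)\bigr)/h_n$ as $B'_{i,j}(s+\eta h_n)$, uses the geometric bound (uniform in $n$) to make the tail $\sum_{i+j\ge K}$ smaller than $\epsilon/2$ uniformly in $n$, and lets the finitely many remaining terms converge termwise. In effect the paper inlines the proof of the very theorem you cite, specialized to this setting; your version is shorter and leans on standard real analysis, provided you state carefully the reindexing step and the locality of differentiability (apply the cited theorem on compact subintervals of $(\delta,\infty)$, which suffices since $\delta>0$ is arbitrary), while the paper's version is self-contained and never needs to convert the double sum into a single enumeration, since its tail estimate disposes of all large $i+j$ at once.
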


\begin{proof}
Fix $s > 0$. First note that the series converges by Lemmas \ref{lem:C}-\ref{lem:Cp} and the fact that $D_{i,j} (s)$ and $D'_{i,j} (s)$ are uniformly bounded for all $i+j \ge 0$. It suffices to prove that for each sequence $\{h_n\}$ converging to $0$, such that $s+h_n > 0$ for all $n$,
\[
\lim_{n\rightarrow \infty} \frac{C(s+h_n)-C(s)}{h_n} = \sum_{i=0}^\infty\sum_{j=0}^\infty B'_{i,j} (s)
\]
where $B_{i,j} (s) = D_{i,j} (s) C_{i,j} (s)$.
Let \{$h_n$\} be such sequence. Thus there is a $\delta > 0$ such that $s+h_n > \delta$ for all $n$. According to \eqref{eq:bound1} and \eqref{eq:bound3} and that the fact that
$D_{i,j} (s+h_n)$ and $D'_{i,j} (s+h_n)$ are uniformly bounded for all $n$ and $i+j \ge 0$,
there are constants $M$ and $r < 1$ such that for all $n$ and $i+j \ge 0$,
\begin{equation}\label{eq:bound5}
  |B'_{i,j}(s+h_n)| \le M r^{i+j} E.
\end{equation}
We need to show that for each $\epsilon >0$ there is an $N$ such that for $n > N$
\begin{equation}\label{eq:bound6}
\left| \frac{C(s+h_n)-C(s)}{h_n} - \sum_{i=0}^\infty\sum_{j=0}^\infty B'_{i,j}(s) \right|
=
\left|  \sum_{i=0}^\infty\sum_{j=0}^\infty \left( \frac{B_{i,j}(s+h_n)-B_{i,j}(s)}{h_n} - B'_{i,j}(s) \right) \right| < \epsilon .
\end{equation}
Let $\epsilon > 0$. For given $n$ and $i,j \ge 0$, it follows from the mean value theorem that there is an $0<\eta<1$ such that
$
(B_{i,j}(s+h_n)-B_{i,j}(s))/h_n = B'_{i,j} (s + \eta h_n)
$.
Hence, by \eqref{eq:bound5},
\[
\left| \frac{B_{i,j}(s+h_n)-B_{i,j}(s)}{h_n} - B'_{i,j}(s) \right| \le |B'_{i,j} (s + \eta h_n)| + |B'_{i,j}(s)|
\le 2 M r^{i+j} E.
\]
Note that $M$ and $r$ do not depend on $n, i, j$. So there is a constant $K$ such that for all $n$,
\begin{equation}\label{eq:bound7}
\sum_{i+j\ge K} \left| \frac{B_{i,j}(s+h_n)-B_{i,j}(s)}{h_n} - B'_{i,j}(s) \right| < \frac{\epsilon}2 .
\end{equation}
Further, for given $i, j\ge 0$,
$(B_{i,j}(s+h_n)-B_{i,j}(s))/h_n$ converges to $B'_{i,j} (s)$ as $n$ tends to infinity. Hence, there is an $N$ such that for $n > N$
\begin{equation}\label{eq:bound8}
\sum_{0 \le i+j <  K} \left| \frac{B_{i,j}(s+h_n)-B_{i,j}(s)}{h_n} - B'_{i,j}(s) \right| < \frac{\epsilon}2 .
\end{equation}
Combining \eqref{eq:bound7} and \eqref{eq:bound8} yields \eqref{eq:bound6}. \qed
\end{proof}

%Then, by setting $s=\theta_i$  in (\ref{eq:psis'}) we obtain $\bpsi'(\theta_i$.
%\begin{equation}\label{eq:psitheta12'}
%\bpsi'(\theta_i) = \bq (\theta_i) C'(\theta_i) + \bq' (\theta_i) C(\theta_i), \quad i = 1, 2.
%\end{equation}
Substitution of \eqref{eq:psitheta12} and \eqref{eq:psis'} with $s = \theta_i$ in \eqref{eq:phi0} yields
\[
\bphi(0) = \bp_{k-1} \sum_{i=1}^{2}
\left( C(\theta_i) A'_i (0) + C'(\theta_i) A_i (0) \right)
\]
and normalization Eq. (\ref{eq:nor}) can be rewritten as
\begin{equation}\label{eq:nor2}
\sum_{n = 0}^{k-1} \bp_{n}\be + \bp_{k-1} \sum_{i=1}^{2}
\left( C(\theta_i) A'_i (0) + C'(\theta_i) A_i (0) \right) \be = 1.
%\sum_{0\le i+j < k-1} p_{i,j} + \bp_{k-1} (I + C'(\theta_1)A(0) + C'(\theta_2)B(0) + C(\theta_1)A'(0) +
%C(\theta_2)B'(0)) \be = 1.
\end{equation}

The above findings are summarized in the following theorem.

\begin{theorem}
  The steady-state LST $\bpsi (s)$ of the virtual queueing time satisfies
  \[
  \bpsi(s) = \bp_{k-1} C(s),
  \]
  where $C(s)$ is defined by \eqref{eq:Cs} and the probability vectors $\bp_n$ for $0 \le n \le k-1$ are the solution to the system of linear equations \eqref{eq:bb1}, \eqref{eq:pk} and \eqref{eq:nor2}.
\end{theorem}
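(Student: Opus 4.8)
The plan is to assemble the theorem from the machinery already in place, in two parts: first establish the transform identity $\bpsi(s) = \bp_{k-1} C(s)$, and then exhibit the linear system that fixes the boundary vectors $\bp_0,\dots,\bp_{k-1}$. For the identity I would start from the steady-state equation \eqref{eq:psi0} in its shifted form \eqref{eq:psirec}, and iterate the latter exactly $n$ times to obtain the finite expansion \eqref{eq:psis02}, which splits $\bpsi(s)$ into a completed sum $\bp_{k-1}\sum_{i+j<n} D_{i,j}(s)C_{i,j}(s)$ and a remainder $\sum_{i+j=n}\bpsi_{i,j}(s)C_{i,j}(s)$.

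I would then send $n\to\infty$. By Lemma~\ref{lem:C} and Corollary~\ref{cor:abs} the completed sum converges absolutely and uniformly on $\{s>\delta\}$ to $\bp_{k-1}C(s)$, with $C(s)$ the series \eqref{eq:Cs}. For the remainder, each component of $\bpsi_{i,j}(s)$ is the transform of a finite measure of total mass at most one, so $|\psi_{i,j}(s)|\le 1$, giving $|\sum_{i+j=n}\bpsi_{i,j}(s)C_{i,j}(s)|\le\sum_{i+j=n}|C_{i,j}(s)|$; this is a tail of the convergent series of Lemma~\ref{lem:C} and hence vanishes. This yields \eqref{eq:psis2}, valid for every $s>0$, and in particular \eqref{eq:psitheta12} at $s=\theta_1,\theta_2$.

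For the boundary vectors I would collect three families of relations. The equations \eqref{eq:bb1}, with the recursively defined matrices $R_n$, come straight from the balance equations \eqref{eq:b1} for the $W=0$ states. Setting $s=0$ in \eqref{eq:phi} annihilates the left-hand side $s\bphi(s)$ and, after replacing $\bpsi(\theta_i)$ by $\bp_{k-1}C(\theta_i)$ via \eqref{eq:psitheta12}, produces the homogeneous equation \eqref{eq:pk}, which says that $\bp_{k-1}$ is a left null vector of $\Delta_{k-1}-R_{k-1}\Lambda_{k-2}+\sum_i C(\theta_i)A_i(0)$. The normalization \eqref{eq:nor}, namely $\sum_n\bp_n\be+\bphi(0)\be=1$, then becomes \eqref{eq:nor2} once $\bphi(0)$ is made explicit: for this I would differentiate \eqref{eq:phi}, evaluate at $s=0$ to get \eqref{eq:phi0}, and substitute $\bpsi(\theta_i)=\bp_{k-1}C(\theta_i)$ together with $\bpsi'(\theta_i)=\bp_{k-1}C'(\theta_i)$ from \eqref{eq:psis'}.

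The step demanding the most care is the evaluation of $\bphi(0)$, since it requires differentiating the doubly-infinite series $C(s)$ term by term. I would justify this using Lemmas~\ref{lem:Cp} and~\ref{lem:Cp2}: the geometric bound \eqref{eq:bound3} on the $C'_{i,j}(s)$ makes $\sum_{i,j}(D'_{i,j}C_{i,j}+D_{i,j}C'_{i,j})$ uniformly convergent, and the mean-value and dominated-convergence argument of Lemma~\ref{lem:Cp2} identifies this series with $C'(s)$; granting this, \eqref{eq:psis'} and hence \eqref{eq:nor2} are legitimate. The only genuinely loose end is that \eqref{eq:pk} together with \eqref{eq:nor2} pins down $\bp_{k-1}$ uniquely: a nontrivial solution exists because $\{(W(t),N_1(t),N_2(t))\}$ is positive recurrent with a unique stationary law, and to secure uniqueness up to scale — so that \eqref{eq:nor2} fixes the remaining scalar — I would argue that the coefficient matrix in \eqref{eq:pk} has rank $k-1$, invoking the irreducibility of the process to exclude a null space of dimension greater than one.
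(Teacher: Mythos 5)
Your proposal is correct and follows essentially the same route as the paper: iterating \eqref{eq:psirec} to get \eqref{eq:psis02}, killing the remainder via $|\psi_{i,j}(s)|\le 1$ and Lemma~\ref{lem:C}, and then assembling \eqref{eq:bb1}, \eqref{eq:pk} and \eqref{eq:nor2} exactly as the paper does, with Lemmas~\ref{lem:Cp} and~\ref{lem:Cp2} justifying the term-by-term differentiation needed for $\bphi(0)$. Your closing observation about uniqueness of $\bp_{k-1}$ (rank of the coefficient matrix in \eqref{eq:pk}, via irreducibility and positive recurrence) is actually a point the paper glosses over, so it is a welcome addition rather than a deviation.
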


\subsection{Special Case $\mu_1 = \mu_2 = \mu$}\label{sec:Special}
We now assume $\mu_1=\mu_2=\mu$. This case has also been studied by Sarhangian and Balc{\i}og̃lu \cite{sarhangian2013waiting}. The problem simplifies considerably, since we do not need to keep track of $N_1(t)$ and $N_2(t)$ separately, only $N(t) = N_1(t)+N_2(t)$. Define
\begin{eqnarray*}
\psi(s) & = & \lim_{t \rightarrow \infty} E(s^{-sW(t)};N(t) = k-1)\\
& = & E(s^{-sW};N = k-1), \\
p_{i} & = & \lim_{t \rightarrow \infty} P(W(t) = 0, N(t)=i) \\
& = & P(W=0, N = i), \quad 0 \le i \le k-1,\\
\phi(s) & = & \psi(s) - p_{k-1} .
\end{eqnarray*}
Then the balance equations \eqref{eq:b1}  can be simplified to
%reduce to
%\begin{eqnarray*}
%p_0 (\lambda_1+\lambda_2) &=& p_1 \mu, \nonumber \\
%p_n (\lambda_1+\lambda_2) + p_n n \mu &=& p_{n-1} (\lambda_1+\lambda_2) + p_{n+1} (n+1) \mu, \quad 1 \le n < k-1,
%\end{eqnarray*}
%which can be simplified to (by adding the first $n$ equations)
\begin{equation}\label{eq:b2}
p_{n-1}  (\lambda_1+\lambda_2) = p_n n \mu, \quad 1 \le n \le k-1,
\end{equation}
and  \eqref{eq:psi0} reduces to
% we can derive for $s \ge 0$, {\color{red}
%\[ s \phi(s) =  \psi(s+\theta_1) \frac{\lambda_1 s}{s+k\mu} + \psi(s+\theta_2) \frac{\lambda_2 s}{s+k\mu} - p_{k-1} (k-1)\mu + p_{k-2} (\lambda_1+\lambda_2).\]
%The last two terms cancel by \eqref{eq:b2} for $n=k-1$.} Hence, by dividing both sides by $s$ and substituting $\phi(s) = \psi(s) - p_{k-1}$ we get
\[
\psi (s) = p_{k-1} + \psi(s+\theta_1) \frac{\lambda_1 }{s+k\mu} + \psi(s+\theta_2) \frac{\lambda_2 }{s+k\mu} .
\]
%This equation also  follows directly from \eqref{eq:psi0} by using $\psi(s) = \bpsi(s) \be$.
The solution of this equation is given by (cf. \eqref{eq:psis2})
\[
\psi(s) = p_{k-1} c(s),
\]
where
\[
c(s) = \sum_{i=0}^\infty\sum_{j=0}^\infty c_{i,j}(s) .
\]
For $i + j > 0$ the terms $c_{i,j}(s)$ are determined from the recursion
\[ {c_{i,j}(s)} = \frac{\lambda_1}{s+(i-1)\theta_1+j\theta_2 + k\mu}{c_{i-1,j}(s)} + \frac{\lambda_2}{s+ i\theta_1+(j-1)\theta_2+k\mu}{c_{i,j-1}(s)}\]
with $c_{0,0} (s) = 1$ and $c_{i,j} (s) = 0$ if $i<0$ or $j<0$. %The probabilities $p_i$ follow from the balance equations
%\[
%p_i i \mu = p_{i-1} (\lambda_1+\lambda_2), \quad i = 1, \ldots, k-1,
%\]
%and
The normalization equation becomes
\[
1 = \sum_{n=0}^{k-1} p_n + \phi(0) = \sum_{n=0}^{k-1} p_n + \psi(\theta_1) \frac{\lambda_1 }{k\mu} + \psi(\theta_2) \frac{\lambda_2 }{k\mu}.
\]
Together with \eqref{eq:b2} this yields for $n = 0, \ldots, k-1$,
\[
p_n = \left(1-\psi(\theta_1) \frac{\lambda_1 }{k\mu} - \psi(\theta_2) \frac{\lambda_2 }{k\mu}\right) \frac{\frac{\rho^n}{n!}}{\sum_{j=0}^{k-1} \frac{\rho^j}{j!}} ,
\]
where $\rho = \frac{\lambda_1+\lambda_2}{\mu}$.

\section{Performance Measures} \label{sec:pm}
Now we show how many useful performance measures in steady state can be computed in terms of the LST evaluated at $\theta_1$ and $\theta_2$. Suppose the $M/M/k+M$ system is in steady state. An arrival faces a queuing time of $W$. If the arrival is from class $i$, he will enter service if his impatience time $T_i$ is longer than $W$. For class 1, this probability is given by (and similarly for class 2, by replacing $\theta_1$ by $\theta_2$)
\[
 P(T_1 > W) = E(e^{-\theta_1 W})  =  \sum_{i+j \le k-1}  E(e^{-\theta_1W};N_1 = i, N_2 = j)
  = \sum_{n < k-1} \bp_n \be + \bpsi(\theta_1) \be.
 \]
Here we are using that $W=0$ and $E(e^{-\theta W};N_1 = i, N_2 = j) = p_{ij}$ when $i+j < k-1$. Next, using Little's law, we see that the expected number of servers busy serving class $1$ customers is given by
\[\lambda_1 P(T_1 > W) \frac{1}{\mu_1} = \frac{\lambda_1}{\mu_1} \left( \sum_{n < k-1} \bp_n \be + \bpsi(\theta_1) \be \right)\]
%The probability that the server is busy is given by $\rho = \rho_1 + \rho_2$.
and the steady state throughput is equal to
\begin{eqnarray*}
 \sum_{i=1}^2 \lambda_i P(T_i > W) &=& \sum_{i=1}^2 \lambda_i \left( \sum_{n < k-1} \bp_n \be + \psi(\theta_i) \be \right).
 % + \lambda_2 \left( \sum_{i+j < k-1} p_{i,j} + \psi(\theta_2) \be \right) .
\end{eqnarray*}
%and the reneging rate by $\lambda_1+\lambda_2$ minus the throughput.
Next we compute the expected time of a class $1$ customer waiting for service. This is given by
\begin{eqnarray*}
E(\min(W,T_1)) &=& E(E(\min(W,T_1)|W))\;  = \; E\left(\frac{1-e^{-\theta_1 W}}{\theta_1}\right) \\
&=& \frac{1-E(e^{-\theta_1 W})}{\theta_1} \; = \; \frac{1-P(T_1 > W)}{\theta_1}.
%&=&  \sum_{i+j \le k-1}  E\left(\frac{1-e^{-\theta_1 W}}{\theta_1};N_1 = i, N_2 = j\right) \\
%&=&  \sum_{i=0}^{k-1} \frac{p_{i,k-1-i} -\bpsi_i(\theta_1)}{\theta_1} \; = \; \frac{\bp_{k-1}\be %-\bpsi(\theta_1)\be}{\theta_1}.
\end{eqnarray*}
By Little's law, we obtain for the expected number of class $i$ customers waiting for service
\[
E(L_i^q) = \lambda_i E(\min(W,T_i)) = \frac{\lambda_i}{\theta_i} (1-P(T_i > W)).
\]
Finally, the expected conditional waiting time of class $i$ customers entering service follows from
\[
E(W| T_i > W) = \frac{E(W; T_i > W)}{P(T_i > W)}
\]
where
\[
E(W; T_i > W) = E(W e^{-\theta_i W}) = \left.\frac{d}{ds} E(e^{-s W})\right|_{s=\theta_i} = \bpsi'(\theta_i) \be.
\]
These formulas simplify considerably when applied to $M/G/1+M$ system.  In particular, the probability that the server is busy serving a class $i$ customer is given by
\[ \rho_i =  \lambda_i\tau_i\psi(\theta_i),\]
where $\psi(s)$ is as defined in Eq.~\eqref{eq:psidef}.
The probability that the server is busy is given by
\[ \rho = \psi(\theta_1)\lambda_1\tau_1 + \psi(\theta_2)\lambda_2\tau_2.\]
In steady state the throughput is given by
\[ \lambda_1\psi(\theta_1) + \lambda_2\psi(\theta_2)\]
and the reneging rate by
\[ \lambda_1(1-\psi(\theta_1)) + \lambda_2(1-\psi(\theta_2)).\]
The expected number of class $i$ customers waiting for service in steady state is given by
\[ E(L_i^q) = \frac{\lambda_i}{\theta_i}(1-\psi(\theta_i)).\]
The expected number of class $i$ customers in the system in steady state is given by
\[ E(L_i) = E(L_i^q) + \lambda_i\tau_i\psi(\theta_i).\]
This implies that, in the special case when $\tau_i = 1/\theta_i$,
\[ E(L_i) = \frac{\lambda_i}{\theta_i}.\]
This is expected, since in this case the system behaves like an infinite server queue for each class of customers.

\section{Numerical Analysis}\label{sec:Numerical}
Recall that the previous analytical models of multiclass FCFS systems have allowed for customers across classes to differ in either their service time distributions or their patience time distributions, but not in both distributions (\cite{van2012analysis}, \cite{sakuma2017multi}, \cite{sarhangian2013waiting}). A contribution of our work is that we allow customers across classes to differ in both distributions. This enables us to model service systems such as call centers that segment their arrivals into classes of callers whose requests may differ greatly in their complexity and criticality. We are therefore interested in using our characterization to compare the performance of a system where customers across classes differ in only one of the distributions with the performance of a system where the customers across classes differ in both distributions. To do this, we conduct a numerical analysis by using the performance measures that we derived for the $M$/$M$/$k$+$M$ system in Sect. \ref{sec:pm}.

We compare the performance of three systems over a range of system loads. In all of the systems, requests from class 1 have a mean service time of 1 unit ($\mu_1=1$) and requests from class 2 have a mean service time of .5 units ($\mu_2=2$). In the first system, customers in both classes are equally patient, with a mean patience time of 2/3 units ($\theta_1,\theta_2=1.5$). We call this system the \textit{base} system as customers from the two classes differ in their distribution of service times but not in their distribution of patience times. In the second system, the mean patience time of class 1 customers is 1 unit ($\theta_1=1$), while the mean patience time of class 2 customers is .5 units ($\theta_2=2$). We call this system the \textit{positive} system as there is a positive correlation across classes between customers' service times and patience times, i.e., class 1 customers have longer average service times and longer mean patience times. In the third system, class 1 customers have a mean patience time of .5 units ($\theta_1=2$), while class 2 customers have a mean patience time of 1 unit ($\theta_2=1$). Holding with our naming convention, we call this system the \textit{negative} system. In each scenario in our analysis, the arrival rate of the class 1 and the class 2 customers are equal ($\lambda_1=\lambda_2$). To vary the system load, we hold the number of servers in the system at 5, while varying the total arrival rate ($\lambda=\lambda_1+\lambda_2$) between 6 and 20.

In Figure \ref{fig:analysis} we present four steady-state performance measures from the three systems. The measures include the percentage of all customers who receive service, the mean waiting time of all customers, the system throughput, and the average service time of customers who receive service. We discuss each of these measures:

\begin{figure}[h]
\begin{center}
	\includegraphics[trim = 0.02in 0in 4.6in 0in, clip, width=6.5 in]{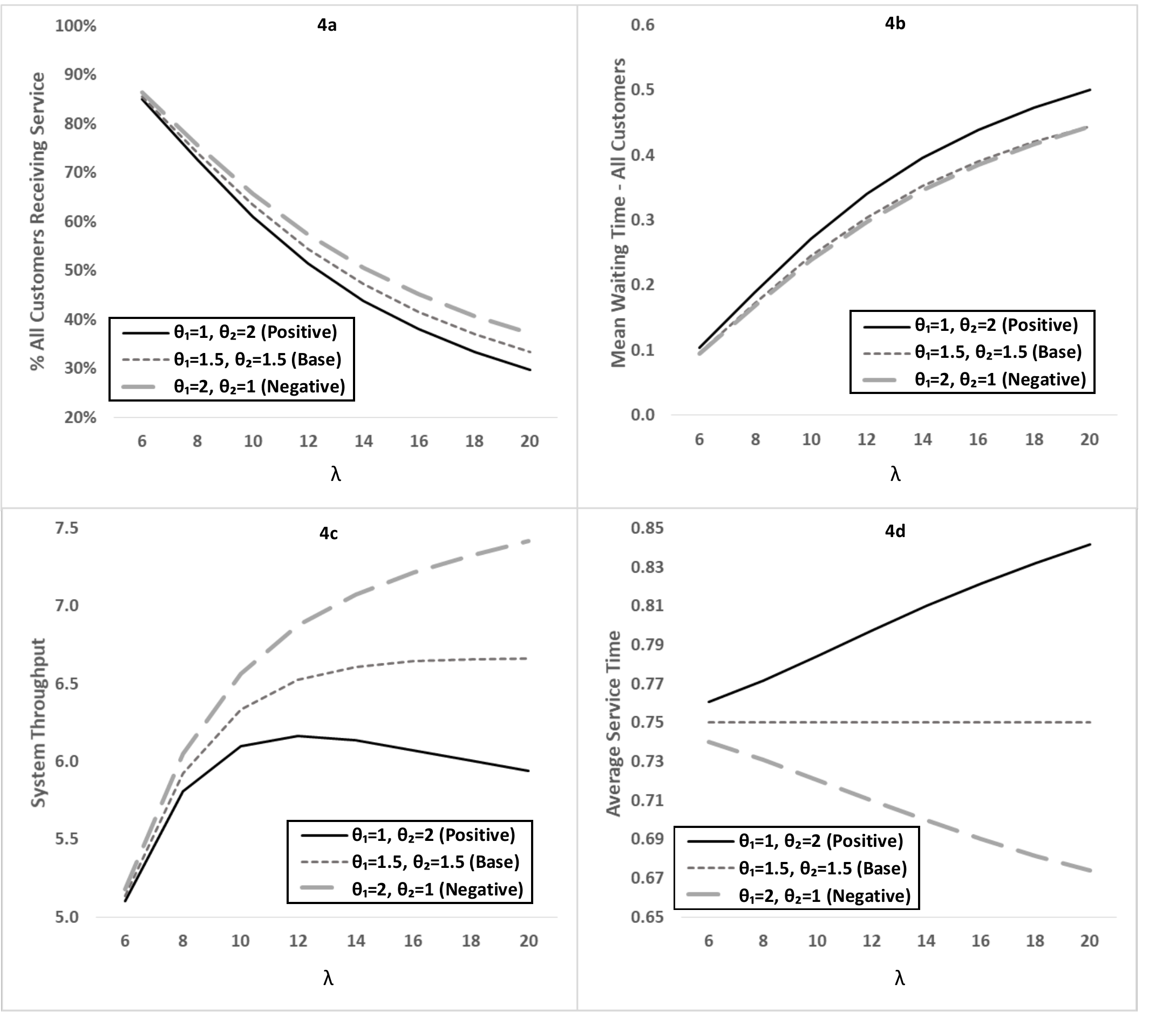}
	\caption{Various Performance Measures of Numerical Analysis}
	\label{fig:analysis}
\end{center}
\end{figure}

\begin{itemize}

\item \textbf{\% All Customers Receiving Service (4a):} The percentage of all customers who receive service is lowest in the positive system and highest in the negative system over all arrival rates. Recall that class 1 customers are more patient than class 2 customers in the positive system and vice versa in the negative system. Consequently, a greater (smaller) proportion of the customers who receive service are from class 1 in the positive (negative) system. Also recall that class 1 customers have longer average service times than class 2 customers. Since servers in the positive (negative) system spend a greater (smaller) percentage of their time serving the customers with longer service times, their aggregate service rate decreases (increases), which decreases (increases) the percentage of all customers who receive service. This result has ramifications for service level forecasting as one of the common measures of service level is the percentage of customers who receive service. This demonstrates that managers who do not account for differences in the distribution of customers' service times and patience times across classes may produce inaccurate service level forecasts.

\item \textbf{Mean Waiting Time of All Customers (4b):} Irrespective of the arrival rate, customers have the highest mean waiting times in the positive system. However, the mean waiting times of customers in the base system and the negative system are nearly the same. This result was surprising as we expected the average waiting times of the customers in the negative system to be lowest since the average waiting times of customers is highest in the positive system. Because this trend was puzzling to us, we calculate the mean waiting times conditional on receiving service and on reneging for both systems. We find two trends. First, in both of the systems the average waiting times of customers who receive service is higher than the average waiting times of customers who renege. Second, both of these waiting time measures are lower in the negative system than in the base system. So, if both of these measures are lower in the negative system, why are the average waiting times of all customers nearly equal between the two systems? The answer lies in the fact that a higher percentage of customers receive service in the negative system. One can think of the average waiting time of all customers as a weighted average of the waiting times of the customers who receive service and of the customers who renege, where the respective weights are the percentage of customers who receive service and the percentage of customers who renege. Recall from Figure 4a that the percentage of customers who receive service is highest in the negative system. This means that in the negative system there is a greater weight from a subset of customers who tend to wait longer, since the average waiting time for customers who receive service is greater than the average waiting time of customers who renege. The result is that the average waiting times of all customers in the base system and the negative system are nearly equal even though the average waiting times of customers who receive service and the average waiting times of customers who renege are both lower in the negative system.\footnote{For example, in the base system with $\lambda=20$, the average waiting time of customers who receive service is 0.654 and the average waiting time of customers who renege is 0.337. In the negative system, the respective averages are 0.641 and 0.324. However, in the base system the percentage of customers receiving service is 33.4\%, while it is 37.2\% in the negative system. Hence, the total average waiting time in the base system is $0.334*0.654 + (1-0.334)*0.337 = 0.443$, and the total average waiting time in the negative system is $0.372*0.641 + (1-0.372)*0.324 = 0.442$, which are nearly equal.} This result again has ramifications for service level forecasting as the average waiting time of all customers is another common measure of service level.

\item \textbf{System Throughput (4c):} We observed in Figure 4a that the percentage of customers receiving service is lowest (highest) under the positive (negative) system. It is therefore not surprising that system throughput is lowest (highest) under the positive (negative) system. However, what is surprising is that in the positive system, throughput is first increasing in the arrival rate, but is then decreasing after some threshold arrival rate. Initially, increasing the arrival rate increases server utilization and hence system throughput. However, the gains in throughput due to the increase in server utilization diminish and are eventually offset by a reduction in the effective service rate of the system. The reason that the service rate is decreasing in system load is that the percentage of time the servers spend with customers from class 1 (who take longer to serve) is increasing in load. This is because a higher proportion of class 2 customers renege as load increases since they are less patient than class 1 customers. The interesting takeaway is that in a system where customers' service times and patience times are positively correlated, increasing traffic can actually decrease throughput. This result has potential implications for systems with limited service capacity that generate revenue based on system throughput, e.g., restaurants. Mangers of such systems may attempt to increase traffic through marketing efforts in order to generate additional revenue but may instead reduce their revenue if the customers in their system who take longer to serve are also more patient.

\item \textbf{Average Service Time (4d):} In the base system the average service time of customers who receive service remains the same regardless of the arrival rate. This is because customers in each class are equally patient, which makes the proportion of customers receiving service who are from each class invariant to the system load. However, in the positive (negative) system, the average service time increases (decreases) as the arrival rate increases. In the positive system, the class 1 customers are more patient. Thus, as the load on the system increases, the proportion of customers receiving service who are from class 1 increases. Because class 1 customers take longer to serve, the average service time increases as the arrival rate increases. The reverse is true in the negative system, where class 2 customers are more patient but take less time to serve. This relationship between system load and average service time has managerial implications. Servers in customer-facing systems are often evaluated and incentivized based on their average service times. For example, a common practice in call centers is to provide financial rewards for agents to keep their average service time under some threshold. Our analysis shows that managers may reach false conclusions regarding their servers' productivity if they evaluate their servers based solely on their average service times. In the negative system, managers may receive the impression that servers are speeding up when the arrival rate is higher. Consequently, managers may wrongly reward their servers for their supposed efforts to work faster under heavy loads. Conversely, in the positive system, managers may falsely reprimand their servers for allegedly slowing down as arrival rates increase. Our model demonstrates than an observed correlation between average service times and system load, such as what we see in Figure 4d, may have no correlation with the servers' efforts. Rather, the observed correlation may be entirely due to a correlation across classes between customers' service times and patience times.

\end{itemize}

We make one final observation regarding the performance of these systems as the arrival rate tends to infinity. We've shown that of the customers who receive service, the proportion who are from the more patient class increases as the arrival rate increases. Intuitively, we would expect this proportion to tend to one as the arrival rate tends to infinity due to the stochastic dominance of the patience times of one class over the other. Indeed, as we increase the arrival rate in the negative and positive systems from our numerical examples, we observe this phenomenon. For example, when we set the arrival rate to 1,000 for each class in the positive system, of the callers who receive service, 95.9\% are from class 2. An outcome of this phenomenon is that as the arrival rate tends to infinity the average service time and throughput of the system tends to the same performance measures of an overloaded system comprised entirely of customers from the more patient class. Hence, in the positive system the average service time tends to 1 and system throughput tends to 5, while in the negative system average service time tends to .5 and system throughput tends to 10.

Overall, our numerical analysis demonstrates the importance of accounting for differences across classes in the distribution of customers' service times and patience times. We see that differences in these distributions may substantially affect key performance measures, which have a wide range of managerial implications, including service level forecasting, revenue management, and the evaluation of server performance. Consequently, a contribution of our work is that our analytical characterizations may be used to demonstrate to managers some of the implications of administering a multiclass FCFS queue.

\section{Model as Performance Approximation}\label{sec:Approx}
Managers may be interested in knowing whether they may use the performance measures from our model to approximate the performance of their real world systems. Thus, as a final exercise we compare the simulated performance of a system based on real data with the performance of a comparable $M$/$M$/$k$+$M$ system. Our data comes from a multiclass call center of a small US-based bank. To construct our simulated system, we select two classes from the data which differ in their distribution of service times and caller patience times. The first class is comprised of general banking calls such as balance inquiries and the second class is comprised of technical support calls such as password reset requests. In Figure \ref{fig:service times} we display the estimated density function of the service times from each class and in Figure \ref{fig:patience times} we display the estimated distribution function of the callers' patience times from each class.\footnote{Because we do not know the patience times of the callers in our data who received service, the patience time data is right-censored. Thus, we estimate the patience time distribution using the Kaplan Meier estimator, which accounts for this form of censoring. However, since waiting times in this call center were not long enough to reveal the entire distribution of patience times, the Kaplan Meier estimator only estimates a portion of the distribution. To fill in the remainder of the distribution, we assume that callers' patience times are exponentially distributed with rate parameters equal to the callers' reneging rate over the estimated portion of the distribution.} Note that class 1 callers tend to have shorter service times and shorter patience times than class 2 callers\footnote{The mean service time of callers from class 1 and class 2 are 223.97 and 448.82 seconds, respectively. The mean patience times are 394.08 and 946.53 seconds, respectively.}, i.e., there is a positive correlation across the classes between service times and patience times. To make the systems comparable, we set the mean service time and mean patience time of the two classes in the $M$/$M$/$k$+$M$ system equal to the estimated means of the two classes in the simulated system. Finally, we are interested in understanding to what extent the accuracy of the approximation increases when service rates are allowed to differ across classes. Thus, we also obtain the performance measures of this system under the restriction that the service rates across classes are equal. To do this, we calculate the average service rate across both classes and use the formulas from Sect. \ref{sec:Special} to obtain the performance measures.

\begin{figure}[h]
\begin{center}
	\includegraphics[trim = 0.0in 0in 1.6in 0in, clip, width=4.5 in]{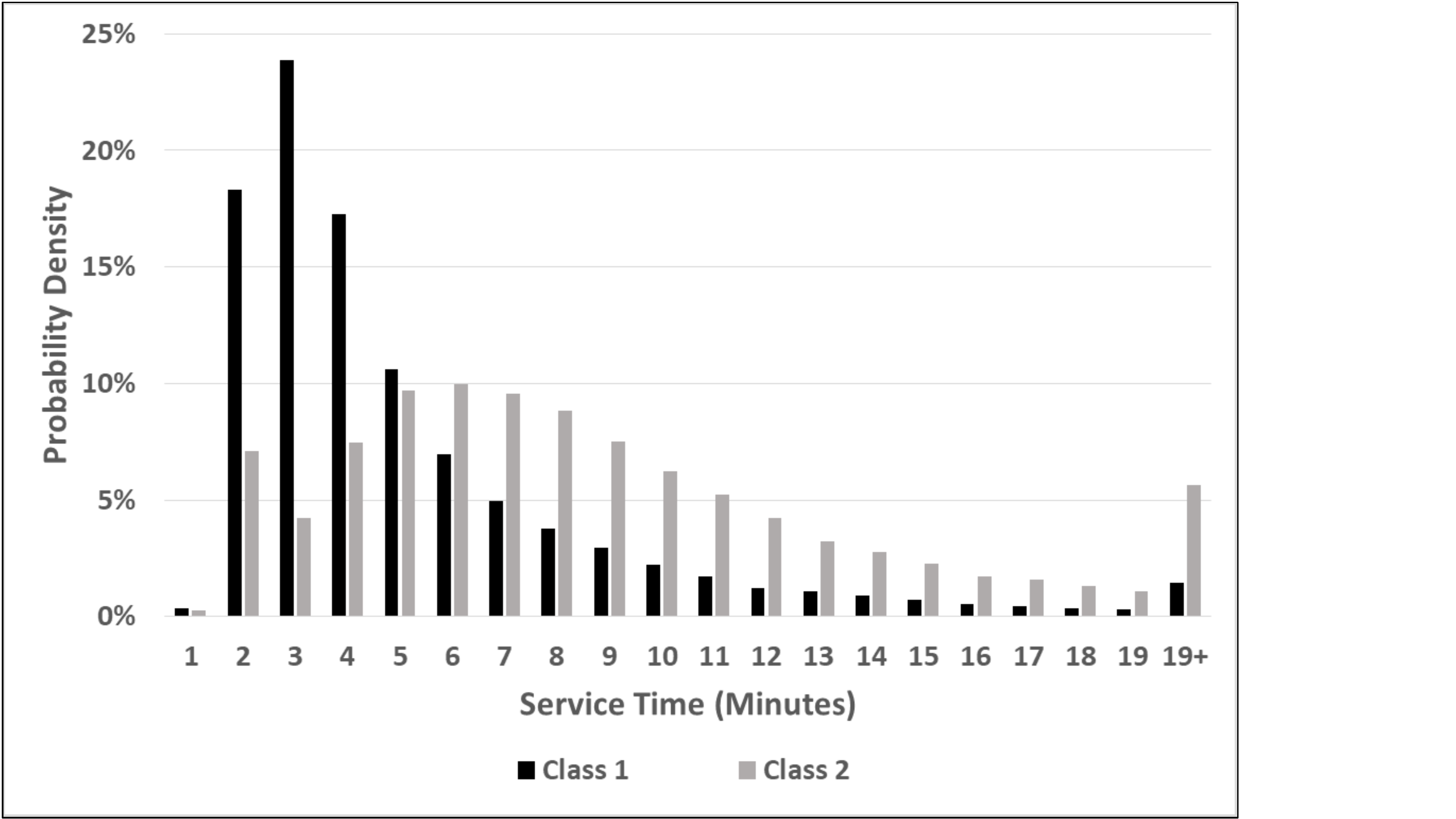}
	\caption{Empirical Density Function of Service Times from two Call Center Classes}
	\label{fig:service times}
\end{center}
\end{figure}

\begin{figure}[h]
\begin{center}
	\includegraphics[trim = 0.0in 0in 1.35in 0in, clip, width=4.5 in]{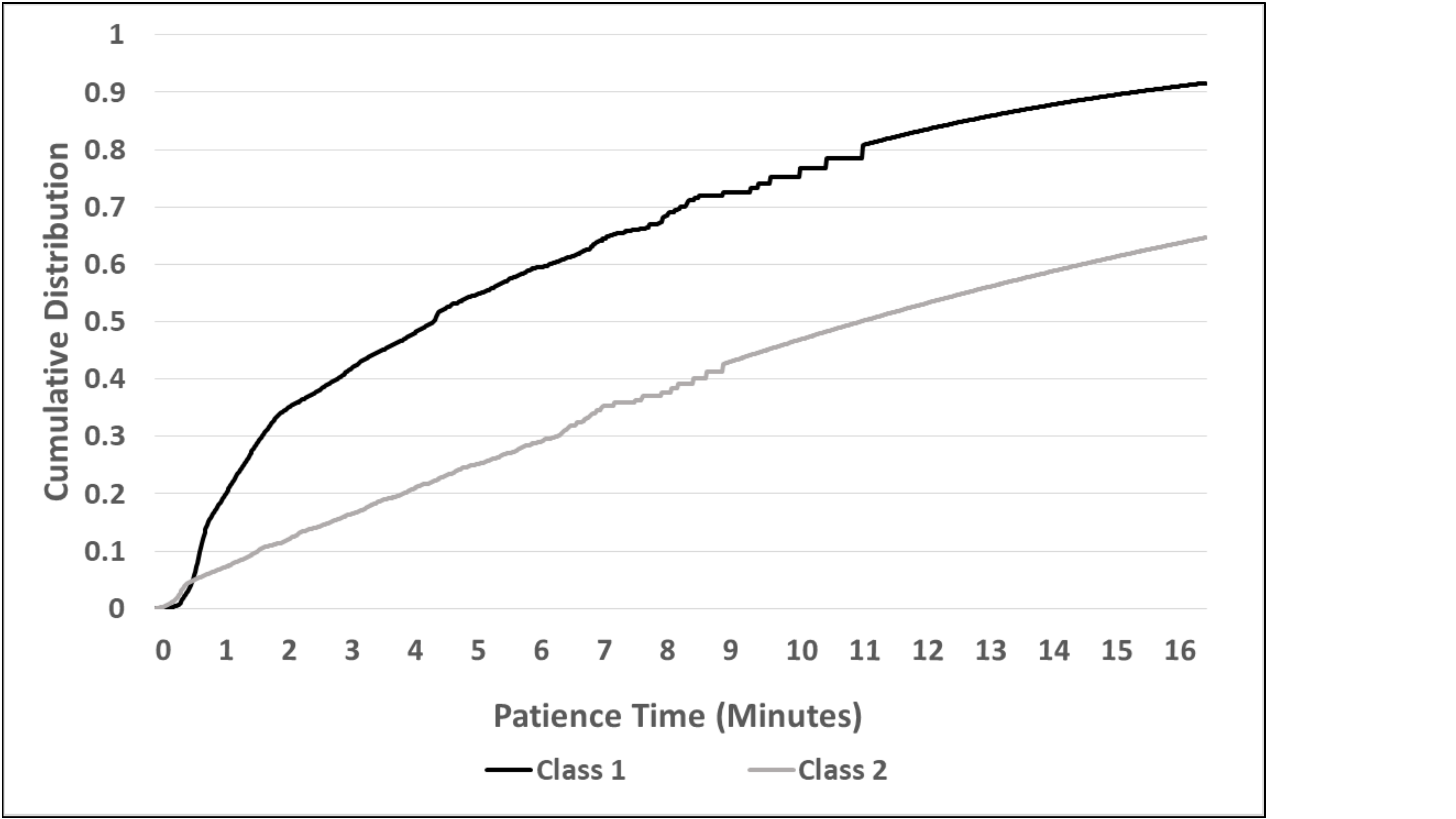}
	\caption{Empirical Distribution Function of Caller Patience Times from two Call Center Classes}
	\label{fig:patience times}
\end{center}
\end{figure}

In all of the systems we assume that calls from each class arrive according to independent Poisson processes with equal arrival rates. To compare the performance of the systems across different loads, we hold the number of servers in the system at 5, and vary the total arrival rate to be 36, 45, 60, and 120 calls per hour. In Table \ref{tab:comparison} we compare the performance of the three systems, where ``Simulation'' corresponds to the simulated system, ``$\mu_1\neq \mu_2$'' corresponds to the analytical characterization of the system where service rates are allowed to differ across classes and ``$\mu_1=\mu_2$'' corresponds to the analytical characterization where we restrict the service rates to be the same across classes. For each class, we present the average waiting time in seconds (AWT), the percentage of callers who receive service (\%RS), and the average number of callers waiting in queue (AQ). We also present the server utilization (Util \%), and the average service time of callers who receive service (AST(All)). We measure how close our analytical characterizations come to the simulated system using relative error, which is given by $|\textrm{simulated} - \textrm{analytical}|/\textrm{simulated}.$ Overall, we find that the performance measures of the $\mu_1\neq \mu_2$ system serve as good approximations of the simulated system, with relative errors of no greater than 3.76\%. Of particular note is the high accuracy in predicting the percentage of callers who receive service (\%RS) and the average service time of callers who receive service (AST(All)), with relative errors typically less than 1\%. Under the lowest arrival rate ($\lambda=36$), the $\mu_1=\mu_2$ characterization provides nearly the same approximation accuracy as the $\mu_1\neq \mu_2$ characterization. However, as the arrival rate increases, the accuracy of the $\mu_1 = \mu_2$ characterization decreases relative to the $\mu_1\neq \mu_2$ characterization. In particular the $\mu_1 = \mu_2$ characterization underforecasts AWT, AQ, Util\% and AST(All) while overforecasting \%RS. This is due to the fact that in this system, callers who tend to be more patient also tend to have longer service times. While the $\mu_1\neq \mu_2$ characterization accounts for this correlation, the $\mu_1 = \mu_2$ characterization does not. Overall, these results demonstrate that managers of two-class FCFS service systems may use our analytical characterization of the $\mu_1\neq \mu_2$ system to produce good approximations of the performance of their systems by collecting the mean service time and the mean patience time of customers in each class.

% Table generated by Excel2LaTeX from sheet '5 Servers'
\begin{table}[htbp]
  \centering
  \caption{Performance Comparison of Simulated System with Real Data and $M$/$M$/$k$+$M$ (Analytical) system}
    \begin{tabular}{lc|cc|cc|cc|c|c}
    \toprule
          &       & \multicolumn{2}{c|}{\textbf{AWT (sec.)}} & \multicolumn{2}{c|}{\textbf{\%RS}} & \multicolumn{2}{c|}{\textbf{AQ}} &       &  \\
    \textbf{System} & $\boldsymbol\lambda$ \textbf{(hr.)} & 1     & 2     & 1     & 2     & 1     & 2     & \textbf{Util \%} & \textbf{AST(All)} \\
        \midrule
    Simulation & 36 & 26.98 & 31.43 & 93.23\% & 96.74\% & 0.13  & 0.16  & 64.44\% & 339.32 \\
    $\mu_1\neq \mu_2$ & 36 & 27.92 & 32.56 & 92.92\% & 96.56\% & 0.14  & 0.16  & 64.15\% & 338.56 \\
    Error &   &    3.48\% & 3.59\% & 0.34\% & 0.19\% & 3.66\% & 3.76\% & 0.45\% & 0.23\% \\
    $\mu_1=\mu_2$ &    36   & 26.24 & 30.26 & 93.34\% & 96.80\% & 0.13  & 0.15  & 63.96\% & 336.40 \\
    Error &       & 2.75\% & 3.73\% & 0.12\% & 0.06\% & 2.58\% & 3.57\% & 0.73\% & 0.86\% \\
    \midrule

    Simulation & 45 & 53.63 & 63.47 & 86.58\% & 93.39\% & 0.34  & 0.40  & 76.70\% & 341.16 \\
    $\mu_1\neq \mu_2$ & 45 & 54.84 & 65.37 & 86.08\% & 93.09\% & 0.34  & 0.41  & 76.33\% & 340.79 \\
    Error &       & 2.26\% & 3.00\% & 0.57\% & 0.32\% & 2.25\% & 2.99\% & 0.49\% & 0.11\% \\
    $\mu_1=\mu_2$ &    45   & 50.99 & 59.92 & 87.06\% & 93.67\% & 0.32  & 0.37  & 76.00\% & 336.40 \\
    Error &       & 4.93\% & 5.59\% & 0.56\% & 0.30\% & 4.94\% & 5.60\% & 0.92\% & 1.40\% \\
   	    \midrule
    Simulation & 60 & 112.64 & 138.75 & 71.55\% & 85.43\% & 0.94  & 1.15  & 90.58\% & 346.57 \\
    $\mu_1\neq \mu_2$ & 60 & 114.06 & 141.66 & 71.06\% & 85.03\% & 0.95  & 1.18  & 90.13\% & 346.46 \\
    Error &       & 1.26\% & 2.10\% & 0.68\% & 0.47\% & 1.47\% & 2.31\% & 0.50\% & 0.03\% \\
    $\mu_1=\mu_2$ &   60    & 104.76 & 127.56 & 73.42\% & 86.52\% & 0.87  & 1.06  & 89.67\% & 336.40 \\
    Error &       & 6.99\% & 8.06\% & 2.61\% & 1.28\% & 6.81\% & 7.88\% & 1.01\% & 2.94\% \\
    \midrule
    Simulation & 120 & 294.91 & 432.12 & 25.22\% & 54.23\% & 4.92  & 7.21  & 99.98\% & 377.06 \\
    $\mu_1\neq \mu_2$ & 120 & 293.92 & 434.13 & 25.42\% & 54.13\% & 4.90  & 7.24  & 99.96\% & 376.98 \\
    Error &       & 0.33\% & 0.46\% & 0.77\% & 0.18\% & 0.44\% & 0.36\% & 0.02\% & 0.02\% \\
    $\mu_1=\mu_2$ &   120    & 274.74 & 389.50 & 30.28\% & 58.85\% & 4.58  & 6.49  & 99.95\% & 336.40 \\
    Error &       & 6.84\% & 9.86\% & 20.06\% & 8.52\% & 6.94\% & 9.96\% & 0.03\% & 10.78\% \\

    \bottomrule
    \end{tabular}%
  \label{tab:comparison}%
\end{table}%

\begin{acknowledgement}
The authors would like to thank Marko Boon for carefully checking all calculations with {\em Mathematica}.
\end{acknowledgement}

\pagebreak

\end{document}